\newtheorem{theorem}{Theorem}
\newtheorem{corollary}[theorem]{Corollary}
\newtheorem{lemma}[theorem]{Lemma}
\theoremstyle{definition}
\newtheorem{remark}{Remark}
\newtheorem*{example*}{Example}
\newtheorem*{examples*}{Examples}
\newcommand{\abs}[1]{\left\lvert #1 \right\rvert}
\newcommand{\set}[1]{\left\{ #1 \right\}}
\renewcommand{\P}{\mathbb P}
\renewcommand{\Im}{\operatorname{Im}}
\newcommand{\R}{\mathbb R}
\newcommand{\C}{\mathbb C}
\newcommand{\Z}{\mathbb Z}
\newcommand{\rat}{\dashrightarrow}
\newcommand{\Kbar}{\bar{K}}
\newcommand{\dyadic}{\mathbb Z\!\left[ \frac{1}{3} \right]}
\newcommand{\parauto}{G^+_p}
\newcommand{\bigauto}{G^\pm_p}
\newcommand{\bigautoev}{G^{\pm,\textrm{ev}}_p}
\DeclareMathOperator{\id}{id}
\DeclareMathOperator{\Stab}{Stab}
\DeclareMathOperator{\NS}{NS}
\DeclareMathOperator{\Gal}{Gal}
\DeclareMathOperator{\Pic}{Pic}
\DeclareMathOperator{\Aut}{Aut}
\DeclareMathOperator{\Diff}{Diff}
\newcommand{\Bl}{\mathrm{Bl}}
\DeclareMathOperator{\Hom}{Hom}
\DeclareMathOperator{\PGL}{PGL}
\DeclareMathOperator{\GL}{GL}
\title[A non-finitely generated automorphism group]{A projective variety with discrete, non-finitely generated automorphism group}
\author{John Lesieutre}
\address{Department of Mathematics, Statistics and Computer Science \\ University of Illinois at Chicago, Chicago, IL 60607}
\email{jdl@uic.edu}
\begin{document}

\begin{abstract}
We construct a projective variety with discrete, non-finitely generated automorphism group.  As an application, we show that there exists a complex projective variety with infinitely many non-isomorphic real forms.
\end{abstract}

\maketitle

\section{Introduction}

Suppose that \(X\) is a projective variety over a field \(K\).  The set of automorphisms of \(X\) can be given the structure of a \(K\)-scheme by realizing it as an open subset of \(\Hom(X,X)\).  In general, \(\Aut(X)\) is locally of finite type, but it may have countably many components, arising from components of the Hilbert scheme.  Write \(\pi_0(\Aut(X)) = (\Aut(X)/\Aut^0(X))_{\Kbar}\) for the group of geometric components.

\begin{examples*}  \leavevmode
\begin{enumerate}
\item Let \(X = \P^r\).  Then \(\Aut(X) \cong \Aut^0(X) \cong \PGL_{r+1}(K)\), and \(\pi_0(\Aut(X))\) is trivial.
\item Let \(E\) be a general elliptic curve over \(K\).  Then \(\pi_0(\Aut(E \times E)) \cong \GL_2(\Z)\) is an infinite discrete group.
\item Let \(X\) be a general hypersurface of type \((2,2,2)\) in \(\P^1 \times \P^1 \times \P^1\).  Then \(X\) is a K3 surface, and the covering involutions associated to the three projections \(X \to \P^1 \times \P^1\) generate a subgroup of \(\pi_0(\Aut(X))\) isomorphic to \(\Z/2\Z \ast \Z/2\Z \ast \Z/2\Z\)~\cite{cantatsurfaces}.
\end{enumerate}
\end{examples*}

According to a result of Brion~\cite{brion}, any connected algebraic group over a field of characteristic \(0\) can be realized as \(\Aut^0(X)\) for some smooth, projective variety.  In contrast, very little seems to be known in general about the component group \(\pi_0(\Aut(X))\).  In what follows, let \(K\) be a field of characteristic \(0\), not necessarily algebraically closed, and let \(\Kbar\) be an algebraic closure.  All varieties are defined over \(K\), except where noted otherwise, and by a point we mean a \(K\)-point.  Our first result is the following.

\begin{theorem}
\label{autoexample}
There exists a smooth, geometrically simply connected, projective variety \(X\) over \(K\) for which \(\pi_0(\Aut(X))\) is not finitely generated.  
\end{theorem}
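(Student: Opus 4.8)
The plan is to produce $X$ as the blowup of a simpler geometrically simply connected variety $Y$ along a finite subscheme, arranged so that exactly an infinitely generated subgroup of $\pi_0(\Aut(Y))$ survives. I would start from a smooth projective rational or K3 surface $Y$, defined over $K$, whose component group $\Gamma := \pi_0(\Aut(Y))$ is infinite and built from involutions: for example a surface carrying several double-cover structures whose covering involutions generate a free product $\Z/2 \ast \cdots \ast \Z/2$, as in the $(2,2,2)$ surface of the third example, or a rational surface equipped with a positive-entropy automorphism coming from a Salem element of $O(\NS(Y))$. Two features of such a $Y$ are essential. First, it is geometrically simply connected, and blowing up smooth centres does not change $\pi_1$, so $X$ will inherit this property. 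Second, a group like $\Z/2 \ast \cdots \ast \Z/2$ is virtually free, hence contains infinitely generated subgroups; the entire problem becomes realizing one of them geometrically as a full automorphism group.

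Since the $\Gamma$-orbits on $Y$ are infinite, there is no finite $\Gamma$-invariant configuration to blow up, and blowing up an infinite orbit is not allowed for a projective variety. Instead I would fix a suitable finite subscheme $Z \subset Y$ and set $X = \Bl_Z Y$. If $Z$ is chosen in sufficiently general position, the components of the exceptional divisor are the only subvarieties of their numerical type, so every automorphism of $X$ must preserve the exceptional locus and therefore descends to an automorphism of $Y$ fixing $Z$; conversely each element of $\Stab_\Gamma(Z)$ lifts. This would identify $\pi_0(\Aut(X))$ with $\Stab_\Gamma(Z)$, reducing the theorem to choosing $Z$ whose stabilizer is not finitely generated.

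To see non-finite generation concretely I would exhibit a surjection from $\pi_0(\Aut(X))$ onto a manifestly non-finitely-generated group. The natural target is a direct sum $\bigoplus_{i \in I} \Z/2\Z$ over a countably infinite index set $I$: one records, for each automorphism and each point $p_i$ of a distinguished infinite $\Stab_\Gamma(Z)$-invariant set of (say, periodic) points, a local $\Z/2\Z$-valued invariant, arranging that the surviving involutions realize each basis vector $e_i$ independently while altering only finitely many coordinates at a time. Because $\bigoplus_{i \in I} \Z/2\Z$ is not finitely generated and a quotient of a finitely generated group is finitely generated, this forces $\pi_0(\Aut(X))$ to be non-finitely generated. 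Crucially, this invariant lives on the infinite orbit structure rather than on the finite-rank lattice $\NS(X)$, which is why the argument escapes the constraint that the image of $\pi_0(\Aut(X))$ in $O(\NS(X))$ sits inside the finitely generated group $\GL_n(\Z)$. Finally, carrying out the whole construction over $K$ (taking $Z$ to be $K$-rational, or Galois-stable) and invoking descent gives the statement over an arbitrary characteristic-zero field.

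The main obstacle is the simultaneous engineering in the middle step: choosing $Z$ so that, on the one hand, $X = \Bl_Z Y$ acquires no automorphisms beyond $\Stab_\Gamma(Z)$, and on the other hand $\Stab_\Gamma(Z)$ is genuinely infinitely generated rather than trivial or finite-index (and hence finitely generated, $\Gamma$ being virtually free). Guaranteeing the ``no new automorphisms'' half requires a rigidity/genericity analysis of the $(-1)$-curves or of the canonical model of $X$, while guaranteeing the non-finite-generation half requires the explicit infinite family of independent involutions together with the separating invariant above; balancing these two demands against each other is where the real work lies.
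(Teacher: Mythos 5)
Your proposal shares the paper's overall architecture---find a non-finitely generated stabilizer subgroup and realize it as the full automorphism group of a blow-up---but both load-bearing steps are left as unconstructed black boxes, and the specific mechanisms you sketch would not work as stated. First, the source of non-finite generation. You propose a surjection onto $\bigoplus_{i \in I} \Z/2\Z$ built from ``local $\Z/2$-valued invariants at periodic points,'' but no such invariant is defined, and in a group like $\Z/2 \ast \Z/2 \ast \Z/2$ the involutions are all conjugate into the free factors and do not commute, so there is no evident way for them to realize independent basis vectors of a direct sum. The paper's mechanism is entirely different and is the real content of the construction: it builds a rational surface $S$ (a blow-up of $\P^2$ at the $15$ intersection points of six lines) with an invariant rational curve $C$, and arranges by a careful choice of the configuration that the restriction map $\Aut(S;C) \to \Aut(C) \cong \PGL_2(K)$ has image containing both $z \mapsto z+1$ and $z \mapsto 3z$. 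Conjugation then forces the image to contain every translation $z \mapsto z + 1/3^n$, so the subgroup \(\parauto\) of automorphisms restricting to translations surjects onto an abelian group containing $\dyadic$, which is not finitely generated; since a quotient of a finitely generated group is finitely generated, \(\parauto\) is not finitely generated. Nothing in your sketch supplies an analogue of this step, and it is exactly the step you defer to ``where the real work lies.''

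Second, the realization step. You propose to blow up a finite subscheme $Z$ of the surface itself and argue that for $Z$ general the blow-up acquires no new automorphisms. Two problems: (i) the relevant subgroup \(\parauto\) is \emph{not} the stabilizer of any closed subscheme of $S$---the condition of being parabolic at $p$ is the condition of fixing $p$ together with the tangent direction $T_{\Delta_S(p)}(\Delta_S(C))$ in $S \times S$, which is why the paper passes to $S \times S \times T$ (with $T$ of general type and $\Aut(T)$ trivial) and performs four blow-ups along carefully chosen positive-dimensional centers; and (ii) for a surface blown up at points, an automorphism of the blow-up merely permutes $(-1)$-curves and need not preserve the exceptional ones, so descent to $Y$ is genuinely hard to guarantee---the paper explicitly flags this obstruction for $\Bl_q S$ and circumvents it by working in dimension six with centers of codimension at least $3$, where the $\P^{r-1}$-averse condition forces every automorphism to permute the fibers of the exceptional divisor and hence descend. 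Without concrete substitutes for these two steps, the proposal is a plan rather than a proof.
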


The question of finite generation of \(\pi_0(\Aut(X))\) has been raised several times in various arithmetic~\cite{mazur},\cite{mothread}  and geometric~\cite{brion},\cite{cantatkummer} contexts.

The automorphism group owes its arithmetic interest in part to its close relation with the forms of a variety over an extension field. If \(X\) is a variety over a field \(K\), and that \(L\) is a Galois extension of \(K\), then an \(L/K\)-\emph{form} of \(X\) is a variety \(X'\) over \(K\) for which \(X_L \cong X^\prime_L\).  The set of \(L/K\)-forms of \(X\) is in bijection with the Galois cohomology set \(H^1(\Gal(L/K),\Aut(X_L))\), and we will construct a variety with infinitely many \(L/K\)-forms by exhibiting a variety for which \(\Aut(X_L)\) is pathological.

\begin{theorem}
\label{twistexample}
Suppose that \(L/K\) is a quadratic extension.  Then there exists a \(K\)-variety \(X^\prime\) with infinitely many \(L/K\)-forms.
\end{theorem}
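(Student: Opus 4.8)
The plan is to feed Theorem~\ref{autoexample} into the form--cohomology dictionary recalled above. Set \(G = \Gal(L/K) = \langle \sigma \rangle \cong \Z/2\), let \(X\) be a variety as in Theorem~\ref{autoexample}, and put \(A = \Aut(X_L)\); by the discreteness asserted there the identity component is trivial, so \(A = \pi_0(\Aut(X_L))\) is a discrete group that is not finitely generated. Because the number of \(L/K\)-forms of a \(K\)-variety \(Y\) equals \(\abs{H^1(G, \Aut(Y_L))}\), and because any form of \(X\) shares this count with \(X\) (its descent datum differs from that of \(X\) by an inner twist, which does not alter the cardinality of \(H^1\)), it suffices to exhibit a single \(K\)-variety whose associated cohomology set is infinite. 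I would first try \(X' = X\) itself and show directly that \(H^1(G, A)\) is infinite; the first task is then to pin down the genuine action of \(G\) on \(A\), which is forced on us and cannot be altered by passing to a twist.

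For \(G \cong \Z/2\) the cohomology is explicit: a \(1\)-cocycle is an element \(a \in A\) with \(a \cdot {}^{\sigma}\!a = 1\), and \(a, a'\) are cohomologous precisely when \(a' = b^{-1} a\, {}^{\sigma}\!b\) for some \(b \in A\). Sending \(a \mapsto a\sigma\) identifies \(H^1(G, A)\) with the set of \(A\)-conjugacy classes of involutions in the nontrivial coset \(A\sigma \subset A \rtimes \langle \sigma \rangle\). To produce infinitely many such classes from the non-finite-generation I see two mechanisms. If the action of \(\sigma\) is trivial — which I expect, since the automorphisms built in Theorem~\ref{autoexample} are defined over the prime field — then \(H^1(G, A)\) is just the set of conjugacy classes of involutions in \(A\), and it is enough to find infinitely many pairwise non-conjugate involutions; this is immediate if \(A\) contains, as in the higher-dimensional analogue of Example~(3), an infinite free product of groups of order two, whose distinct factors are never conjugate. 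Alternatively, I would pass to a \(G\)-equivariant abelian quotient \(q : A \twoheadrightarrow \bar A\) dictated by the structure of the automorphism group, where \(H^1(G, \bar A) = \ker(1 + \sigma)/\Im(1 - \sigma)\) equals \(\bar A / 2\bar A\) when \(\sigma\) acts by inversion and \(\bar A[2]\) when \(\sigma\) acts trivially, and then arrange that the relevant one of these is infinite by choosing \(\bar A\) to carry the non-finite-generation.

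The main obstacle is to realize the infinitude inside \(A\) rather than merely in a quotient: the functorial map \(H^1(G, A) \to H^1(G, \bar A)\) of pointed sets need not be surjective, the obstruction to lifting a class lying in \(H^2(G, \ker q)\). To force surjectivity I would look for a \(G\)-equivariant set-theoretic — better, group-theoretic — section of \(q\), so that every class downstairs lifts; failing that, I would bypass quotients and write down an explicit infinite family of cocycles in \(A\), verifying by hand that no two are related by \(a' = b^{-1} a\, {}^{\sigma}\!b\). Either way the crux is the same: controlling twisted conjugacy, and hence ordinary conjugacy, in the non-finitely-generated group \(A = \Aut(X_L)\), together with the computation of the Tate cohomology of the chosen abelian subquotient. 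I expect the honest identification of the \(G\)-action on \(A\) and the non-conjugacy verification in this explicit but large group to be the steps demanding the most care.
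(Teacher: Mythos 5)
Your framework is the right one---forms are classified by \(H^1(\Gal(L/K),\Aut(X^\prime_L))\), the Galois action is indeed trivial (the paper proves this in Lemma~\ref{autosreal}, using that \(\Pic(S_L)\) is generated by \((-1)\)-curves defined over \(K\) and that \(H^0(S_L,TS_L)=0\)), and the problem reduces to exhibiting infinitely many conjugacy classes of involutions. But there are two genuine gaps. First, the variety cannot be taken to be the \(X\) of Theorem~\ref{autoexample}: its automorphism group is \(\parauto\), whose elements restrict to \(C\) as translations \(z\mapsto z+c\), so any involution in it restricts to \(C\) as the identity and lies in the kernel of \(\Aut(S;C)\to\Aut(C)\)---precisely the subgroup the paper admits it cannot control (Remark~\ref{badsurface}). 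The paper instead blows up \(X_3\) at \emph{two} points, \(u\) and \(\tilde{s}_0(u)\), to manufacture a different variety \(X^\prime\) with \(\Aut(X^\prime)\cong\bigauto\), the group of automorphisms restricting to \(C\) as \(z\mapsto\pm z+c\); only then is there a usable supply of involutions, namely \(s_n=\bar{\tau}^{-n}\circ s_0\circ\bar{\tau}^n\), restricting to \(C\) as \(z\mapsto 3^{1-n}-z\). Second, both of your mechanisms for producing infinitely many classes fail here: \(\Aut(X^\prime_L)\) is not known to contain an infinite free product of copies of \(\Z/2\Z\) (Remark~\ref{notinjective} shows that relations in \(\Aut(S)\) are essentially intractable), and the abelian-quotient route founders on exactly the lifting obstruction you name. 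The paper's substitute is Lemma~\ref{neededproperties}: it suffices to find a \emph{finite-index subgroup} \(G^\prime\subset\Aut(X^\prime_L)\) with trivial Galois action and infinitely many conjugacy classes of involutions, because the exact sequence of pointed sets \(H^0(\Gal(L/K),\Aut(X^\prime_L)/G^\prime)\to H^1(\Gal(L/K),G^\prime)\to H^1(\Gal(L/K),\Aut(X^\prime_L))\) has finite leftmost term. This goes in the opposite direction from your quotient idea and sidesteps the surjectivity problem entirely.

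The remaining---and hardest---work, which your proposal defers to ``verifying by hand,'' is showing that the involutions \(s_{Nn}\) are pairwise non-conjugate in \(G^\prime=\bigautoev\). The paper does this by computing the centralizer of \(s_0\) in \(\bigautoev\) to be \(\langle\id_S,s_0\rangle\): if \(s_{Nn}=\alpha\circ s_0\circ\alpha^{-1}\) with \(\alpha\in\bigautoev\), then \(\bar{\tau}^{Nn}\circ\alpha\) centralizes \(s_0\), forcing \(\alpha\) to be \(\bar{\tau}^{-Nn}\) or \(\bar{\tau}^{-Nn}\circ s_0\), neither of which lies in \(\bigautoev\). The centralizer computation in turn rests on Lemma~\ref{uniquenef}---\(H-E_{05}\) is the unique nef class of square zero in the \((+1)\)-eigenspace of \(s_0^\ast\) with prescribed intersections against the six lines---so that any centralizing automorphism preserves the fibration \(\lambda:S\to\P^1\) given by \(\abs{H-E_{05}}\) and, after a fiberwise argument using the four sections \(L_1,\dots,L_4\) and the generality hypothesis of Remark~\ref{extragenerality}, must be \(\id_S\) or \(s_0\). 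Without a rigidity statement of this kind, the non-conjugacy of your cocycles has no proof; this, not the cohomological formalism, is where the content of the theorem lies.
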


The component group \(\pi_0(\Aut(X))\) is an algebraic analog of the mapping class group \(\pi_0(\Diff(M))\) of a smooth manifold \(M\).  In general, the mapping class group is not finitely generated, with an example provided by tori in dimension at least five~\cite{hatcher}.  However, at least in high dimensions, the failure  of \(\pi_0(\Diff(M))\) to be finitely generated is attributable to the fundamental group of \(M\): according to a theorem of Sullivan~\cite{sullivan}, if \(\dim M \geq 5\) and \(\pi_1(M) = 0\), then \(\pi_0(\Diff(M))\) is finitely generated.  The example presented here is simply connected, and the failure of \(\pi_0(\Aut(X))\) to be finitely generated is not due to its topology.

Before giving the example, we sketch the technique.  If \(X\) is a variety and \(Z\) is a closed subscheme of \(X\), then the automorphisms of \(X\) that lift to automorphisms of the blow-up \(\Bl_Z(X)\) are precisely those that map \(Z\) to itself (not necessarily fixing \(Z\) pointwise).  Our approach, roughly speaking, is to find a variety \(X\) with a subscheme \(Z\) so that \(\Stab(Z) \subset \Aut(X)\) is not finitely generated, and then to pass to the blow-up \(\Bl_Z(X)\) to obtain a variety realizing \(\Stab(Z)\) as an automorphism group.
There are two main difficulties.  The first is to find \(X\) and \(Z\) for which the stabilizer of \(Z\) in \(\Aut(X)\) is not finitely generated.  The second is to ensure that \(\Bl_Z(X)\) does not have any automorphisms other than those lifted from \(X\).  

To prove that our variety \(X\) has non-finitely generated automorphism group, we will exhibit a smooth rational curve \(C\) which is fixed by every automorphism of \(X\).  Restriction of automorphisms then determines a map
\(\rho: \Aut(X) \to \Aut(C) \cong \PGL_2(K)\). We arrange that the image of \(\rho\) is contained in an abelian subgroup of \(\PGL_2(K)\) and exhibit an explicit non-finitely generated subgroup of \(\Im(\rho)\).  It follows that  \(\Aut(X)\) is not finitely generated.

We turn now to the construction.  Given a subvariety \(V \subset X\), write
\[
\Aut(X;V) = \set{\phi \in \Aut(X): \text{$\phi(V) = V$}}
\]
There are three main steps.  First, we describe a family of elliptic rational surfaces \(S\) for which \(\Aut(S)\) is a large discrete group, and there is a rational curve \(C\) on \(S\) with \(\Aut(S;C)\) of finite index.  Second, we specialize the surface \(S\) in order to control the image of \(\Aut(S;C) \to \Aut(C) \cong \PGL_2(K)\).  We arrange that there is a point \(p\) on \(C\) so that the subgroup \(\parauto\) of automorphisms \(\phi\) so that \(\phi\vert_C\) is parabolic with fixed point \(p\) is not finitely generated.  At last, by some auxiliary constructions, we arrive at a six-dimensional variety \(X\) whose automorphisms are precisely given by \(\parauto\).

\section*{Step 1: Automorphisms of surfaces with prescribed action on a curve}

If \(z_1\), \(z_2\), \(z_3\), and \(z_4\) are four distinct points in \(\P^1\), there is a unique involution \(\imath : \P^1 \to \P^1\) with \(\imath(z_1) = z_2\) and \(\imath(z_3) = z_4\), which is defined over \(K\).  Figure~\ref{conicinvo} shows how this map can be constructed geometrically when \(\P^1\) is embedded as a conic in \(\P^2\).

\begin{figure}[hbt]
\centering
\begin{tikzpicture}
    \draw (0,0) ellipse (2 and 1);

    \coordinate (P1) at ($(85:2 and 1)$);
    \filldraw (P1) circle (2pt) node[above] {$z_1$};
    \coordinate (P2) at ($(185:2 and 1)$);
    \filldraw (P2) circle (2pt) node[left] {$z_2$};
    \coordinate (P3) at ($(35:2 and 1)$);
    \filldraw (P3) circle (2pt) node[right] {$z_3$};
    \coordinate (P4) at ($(320:2 and 1)$);
    \filldraw (P4) circle (2pt) node[below] {$z_4$};

    \coordinate (X) at ($(265:2 and 1)$);
    \filldraw (X) circle (2pt) node[below] {$\imath(x)$};

    \coordinate (L) at ($(120:2 and 1)$);
    \filldraw (L) node[above] {$\P^1$};

    \coordinate (Q) at (intersection of P1--P2 and P3--P4);
    \filldraw (Q) circle (2pt) node[right] {$q$};

    \draw (Q)--(P2);
    \draw (Q)--(P4);

    \draw [dashed] (Q)--(X);
    \coordinate (X) at ($(57:2 and 1)$);
    \filldraw (X) circle (2pt) node[below] {$\;x$};

\end{tikzpicture}
\caption{Geometric construction of $\imath$}
\label{conicinvo}
\end{figure}

Given an ordered \(5\)-tuple \(P = (p_1,p_2,p_3,p_4,p_5)\) of points in \(\P^1\), let \(\Gamma_P \subset \PGL_2(K)\) be the subgroup generated by the involutions \(\imath_{ij,kl} : \P^1 \to \P^1\) satisfying \(\imath(p_i) = p_j\) and \(\imath(p_k) = p_l\), where \(i\), \(j\), \(k\) and \(l\) are distinct indices. 
For a given configuration \(P\), there are \(15\) such involutions for different choices of points.

\begin{theorem}
\label{surfaceautos}
Suppose that \(P\) is a configuration of five distinct points in \(\P^1\).   There exists a smooth rational surface \(S\) containing a rational curve \(C \subset S\) such that
\begin{enumerate}
\item \(\Aut(S)\) is discrete;
\item \(\Aut(S;C)\) has finite index in \(\Aut(S)\);
\item The image of \(\rho: \Aut(S;C) \to \Aut(C)\) contains \(\Gamma_P\).
\end{enumerate}
\end{theorem}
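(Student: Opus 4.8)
The plan is to produce the involutions $\imath_{ij,kl}$ from the two-torsion of elliptic double covers and then to force enough of them to coexist on a single rigid rational surface.

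\emph{The basic mechanism.} I would first realize a single commuting family of involutions. Discarding one of the five points, say $p_5$, let $E \to \P^1$ be the double cover branched at $p_1,p_2,p_3,p_4$; as a cover branched at four points $E$ is an elliptic curve, and its four branch points are the images of the two-torsion $E[2]$ under $E \to E/[-1] = \P^1$. Each nontrivial $a \in E[2]$ gives a translation $t_a$, which commutes with $[-1]$ and so descends to an involution of $\P^1$; this descended involution interchanges the branch points in pairs, and the three nontrivial elements of $E[2]$ produce exactly the three involutions $\imath_{12,34}$, $\imath_{13,24}$, $\imath_{14,23}$ pairing $p_1,\dots,p_4$. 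The fifteen involutions of the statement are partitioned into five such Klein four-groups, one for each discarded point.

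\emph{From covers to a surface.} To turn these base involutions into automorphisms of a surface, I would pass to a smooth model of the rational surface $S' = (E \times \P^1)/\langle [-1]\times\sigma\rangle$, where $\sigma$ is an involution of the second factor. Projection to $E/[-1]$ realizes $S'$ as a conic bundle over a rational curve $C \cong \P^1$ carrying $p_1,\dots,p_4$, while projection to $\P^1/\sigma$ exhibits $S'$ as an isotrivial rational elliptic surface on which the $t_a$ act as Mordell--Weil two-torsion. The automorphisms $t_a\times\id$ descend to $S'$, preserve a section $C_0 \cong C$ through a $\sigma$-fixed point, and restrict on $C_0$ to the three involutions above. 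Thus one Klein four-group is realized by genuine automorphisms of a rational elliptic surface together with a distinguished rational curve on which they act as prescribed.

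\emph{Amalgamation (the main obstacle).} The genuine difficulty, and the step I expect to dominate the proof, is to carry out enough of these five constructions on one and the same rational surface $S$, sharing one rational curve $C$ through all five points, so that the lifts generate a copy of $\Gamma_P$ inside the image of $\rho$. The five double covers are pairwise distinct, so their conic-bundle structures cannot be identified directly; I would instead build $S$ by blowing up a configuration on $\P^2$ (or on one of the $S'$) dictated by $P$ so that the resulting surface dominates several of the covers at once and each descended two-torsion automorphism lifts to $S$ while fixing $C$ and inducing the corresponding $\imath_{ij,kl}$. It may well be enough to realize two or three of the Klein four-groups, since a proper subset of the fifteen involutions can already generate $\Gamma_P$; identifying such a subset and a single surface carrying it for \emph{arbitrary} $P$ is the technical heart of the matter.

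\emph{The three conclusions.} Finally I would verify the assertions. For (1) the blow-ups should be chosen to destroy the positive-dimensional automorphisms of each conic bundle --- the one-parameter subgroup acting on the $\P^1$-factor --- so that $h^0(S,T_S)=0$, giving $\Aut^0(S)$ trivial and $\Aut(S)$ discrete. For (2) I would show that $[C] \in \NS(S)$ is singled out by its intersection-theoretic behaviour (for instance as an anticanonical or extremal class), so that its $\Aut(S)$-orbit is finite and $\Aut(S;C) = \Stab([C])$ has finite index. Assertion (3) is then immediate, since by construction the descended two-torsion automorphisms lie in $\Aut(S;C)$ and restrict on $C$ to a generating set of $\Gamma_P$.
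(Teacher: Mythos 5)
Your ``basic mechanism'' is correct as far as it goes: the three involutions pairing any four of the five points do form a Klein four-group, realized by the $2$-torsion of the double cover branched at those four points, and each such group can be lifted to automorphisms of a suitable rational elliptic surface. But the entire content of the theorem is the step you label ``Amalgamation (the main obstacle)'' and then do not carry out: producing one surface $S$ and one curve $C$ on which all fifteen involutions are simultaneously realized by automorphisms preserving $C$, for an \emph{arbitrary} configuration $P$. Your fallback remark that ``a proper subset of the fifteen involutions can already generate $\Gamma_P$'' is unjustified: $\Gamma_P$ is by definition generated by all fifteen, and for a general $P$ there is no reason that two or three of the Klein four-groups should generate the others. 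Without an explicit configuration to blow up, conclusions (1) and (2) also cannot be verified --- your finite-index argument for $\Aut(S;C)$ requires knowing what intersection-theoretic data actually singles out $C$ in $\NS(S)$, which depends on the surface you have not specified.

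For comparison, the paper resolves the amalgamation with a single uniform construction: take six lines $L_0,\dots,L_5$ in $\P^2$ with $C=L_0$ and $L_i\cap L_0=p_i$, and blow up all fifteen intersection points. Each splitting of the six lines into two triangles spans a pencil of cubics, hence a relatively minimal rational elliptic fibration, and the fiberwise negation with the section over the intersection of the two distinguished lines as origin (a Bertini involution, not a $2$-torsion translation as in your mechanism) is regular and lifts to $S$; choosing the distinguished lines to be $L_0$ and $L_m$ realizes exactly $\imath_{ij,kl}$ on $C$, and running over all labellings yields all fifteen involutions on the same surface. Finite index of $\Aut(S;C)$ then follows because the union of the six lines is the unique member of $\abs{-2K_S}$, so every automorphism permutes the lines. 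To salvage your route you would need to supply a comparably explicit configuration dominating all five double covers at once; as written, the proposal defers precisely the theorem's essential difficulty.
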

\begin{proof}

Let \(L_0,\ldots,L_5\) be six lines in \(\P^2\) intersecting at \(15\) distinct points \(p_{ij} = L_i \cap L_j\), and let \(S\) be the blow-up of \(\P^2\) at these \(15\) points, with exceptional divisor \(E_{ij}\) over \(p_{ij}\).
Write \(R\) for a partition of the six lines into two sets of three, with a distinguished line in each set. Given such a labelling, denote by \(L_{R,0}\), \(L_{R,1}\), \(L_{R,2}\) and  \(L_{R,0}^\prime\), \(L_{R,1}^\prime\), \(L_{R,2}^\prime\) the two triples, with \(L_{R,0}\) and \(L_{R,0}^\prime\) the two distinguished lines.  Let \(O_R\) be the point of intersection of \(L_{R,0}\) and \(L_{R,0}^\prime\).

The choice of a labelling \(R\) determines two completely reducible cubics \(\Gamma = L_{R,0} \cup L_{R,1} \cup L_{R,2}\) and \(\Gamma^\prime = L_{R,0}^\prime \cup L_{R,1}^\prime \cup L_{R,2}^\prime\), which span a pencil in \(\P^2\).  The base locus of the pencil is the nine points \(L_{R,i} \cap L_{R,j}^\prime\).  Let \(\pi_R : S_R \to \P^2\) be the blow-up of \(\P^2\) at these points, so that the pencil gives rise to an elliptic fibration \(\gamma_R : S_R \to \P^1\).   Note that the fibration \(\gamma_R\) must be relatively minimal (i.e.\ there are no \((-1)\)-curves contained in the fibers): a general fiber is linearly equivalent to \(-K_{S_R}\), and so a \((-1)\)-curve on \(S_R\) must have intersection \(1\) with every fiber.

The exceptional divisor of \(\pi_R\) above the point \(O_R\) provides a section \(E\) of \(\gamma_R\).  Let \(\imath_R : S_R \rat S_R\) be the birational involution induced by the \(\gamma_R\)-fiberwise action of \(x \mapsto -x\) on the smooth fibers, with the section \(E\) as the identity. 
Since \(\gamma_R\) is a relatively minimal fibration, \(\imath_R\) extends to a regular map \(\imath_R : S_R \to S_R\) on the entire surface \(S_R\) (see e.g.~\cite[II.10, Theorem 1]{shafarevich}).  Such a map necessarily permutes the three nodes on each of the  fibers \(\Gamma\) and \(\Gamma^\prime\), and so lifts to a biregular involution on the fifteen point blow-up \(S\).

In fact, \(\imath_R\) is a lift of the classical Bertini involution of \(\P^2\) centered at the eight points \(L_{R,i} \cap L_{R,j}\) with \((i,j) \neq (0,0)\).
There is a simple geometric description of \(\imath_R\) as a rational map of \(\P^2\), and in particular of its action on the curve \(L_{R,0}\).  Suppose that \(\ell\) is a line in \(\P^2\) passing through the point \(O_R\), and that \(x\) is a point on \(\ell\) lying on a smooth fiber \(C_x\) of \(\gamma_R\).  Then \(\ell\) meets \(C_x\) at \(x\), \(O_R\), and the third point \(\imath_R(x)\). This description remains valid on components of the singular fibers not containing \(\ell\), and so \(\imath_R\) acts on \(\ell\) so that the two points \(L_{R,1} \cap \ell\) and \(L_{R,2} \cap \ell\) are exchanged, as are \(L_{R,1}^\prime \cap \ell\) and \(L_{R,2}^\prime \cap \ell\). This uniquely determines the map: if \(\ell\) is any line through \(O_R\) for which the four points \(L_{R,1} \cap \ell\), \(L_{R,2} \cap \ell\), \(L_{R,1}^\prime \cap \ell\) and \(L_{R,2}^\prime \cap \ell\) are distinct, including \(L = L_{R,0}\), then \(\imath_R\) restricts to \(\ell\) as the unique involution exchanging these two pairs of points.

The rational surface \(S\) claimed by the theorem can now be constructed by choosing the lines in special position.  Fix a line \(C = L_0 \subset \mathbb P^2\), and choose five other lines \(L_1,\ldots,L_5\) so that \(L_i \cap C = p_i\), where the \(p_i\) are the points of the configuration \(P\).  Since the field \(K\) is infinite, for general choices of the \(L_i\), the fifteen points of intersection are distinct.  The involution of \(C\) exchanging \(p_i\) with \(p_j\) and \(p_k\) with \(p_l\) is realized as the restriction of \(\imath_R : S \to S\) for a suitable labelling \(R\): let \(m\) be the unique index which does not appear among \(i\), \(j\), \(k\), and \(l\), and take \(L_{R,0} = C\), \(L_{R,1} = L_i\), \(L_{R,2} = L_j\), \(L_{R,0}^\prime = L_m\), \(L_{R,1}^\prime = L_k\), and \(L_{R,2}^\prime = L_l\).  Thus each involution \(\imath_{ij,kl}\) on \(C\) is the restriction of an automorphism of \(\imath_R : S \to S\) fixing \(C\), as claimed.

A blow-up \(X\) of \(\P^2\) at four points with no three collinear satisfies \(H^0(X,TX) = 0\), and so \(\Aut^0(S)\) is trivial since \(S \to \P^2\) factors through such a blow-up.  It remains only to check that the subgroup \(\Aut(S;C)\) has finite index in \(\Aut(S)\).  This is a consequence of the fact that \(S\) is a Coble rational surface~\cite{coblesurface}, \cite{cantatdolgachev}: the linear system \(\abs{-2K_S}\) has a unique element, the union of the strict transforms of the six lines \(L_i\). Indeed, each line satisfies \(-2K_S \cdot L_i = -4\), and so must be contained in the base locus of \(\abs{-2K_S}\).
An automorphism preserves the anticanonical class, so the six lines are permuted by any element of \(\Aut(S)\), giving rise to a map \(\Aut(S) \to S_6\).  The subgroup \(\Aut(S;C)\) is the preimage of the subgroup of permutations fixing \(C\), and thus of finite index.
\end{proof}

\begin{figure}[hbt]
\centering
\begin{tikzpicture}
\node[anchor=south west,inner sep=0] at (0,0) {\includegraphics[scale=0.5]{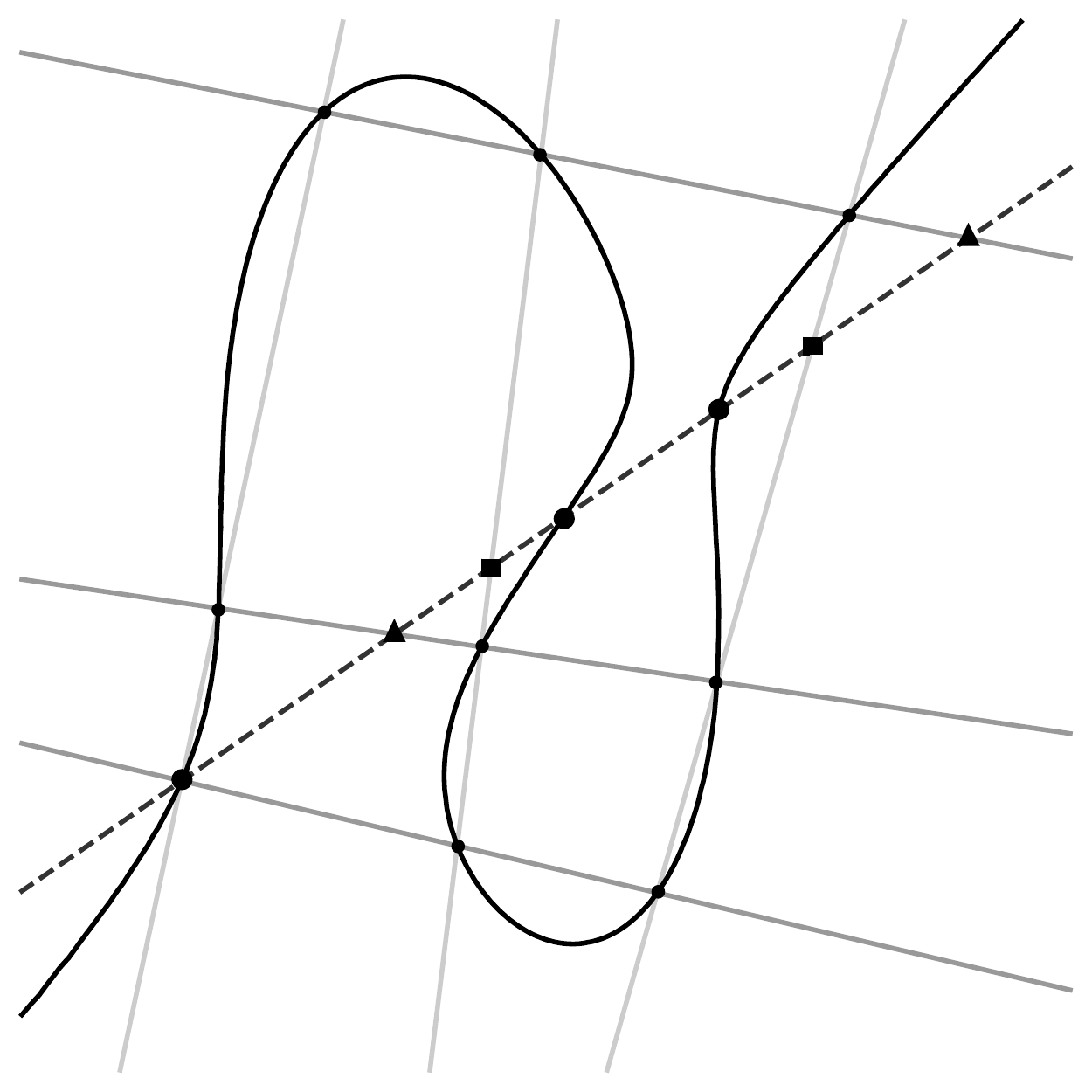}};
\node at (1.4,1.5) {$O_R$};
\node at (1.03,4.5) {$C_x$};
\node at (3.43,3.13) {$x$};
\node at (1.85,2.1) {$\ell$};
\node [fill=white,inner sep=1pt] at (4.8,3.7) {$\imath_R(x)$};
\end{tikzpicture}
\caption{Construction of \(\imath_R : S \to S\)}
\label{irconstruct}
\end{figure}

Figure~\ref{irconstruct} illustrates the geometry of the map \(\imath_R\).  The restriction of \(\imath_R\) to the line \(\ell\) is the unique involution exchanging the two points marked ``\(\blacktriangle\)'' and the two points marked ``\footnotesize\(\blacksquare\)\normalsize''.

\begin{remark}
\label{extragenerality}
Consider the four lines through the point \(p_{05}\) given by \(L_0\), \(L_5\), \(L(p_{05},p_{14})\), and \(L(p_{05},p_{23})\), which define four points in \(\P T_{p_{05}}(\P^2) \cong \P^1\).  It will later be convenient to assume that there is no automorphism of this \(\P^1\) which fixes the first two points while exchanging the third and fourth; this will be the case for general choices of the five lines even after the intersections with \(L_0\) are prescribed.
\end{remark}

\section*{Step 2: Specializing the configuration \(P\)}

We now exhibit a configuration \(P = (p_1,p_2,p_3,p_4,p_5)\) for which the group \(\Gamma_P\) contains two particular transformations with a common fixed point.  Fix coordinates on \(\P^1\).

\begin{lemma}
\label{specialconfig}
For the configuration
\[
P = (0,1,2,3,6)
\]
the group \(\Gamma_P\) contains the two elements
\[
\sigma = \begin{pmatrix}
1 & 1 \\ 0 & 1 
\end{pmatrix}, \quad
\tau = \begin{pmatrix}
3 & 0 \\ 0 & 1 
\end{pmatrix}.
\]
\end{lemma}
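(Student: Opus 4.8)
The plan is to realize $\sigma$ and $\tau$ explicitly as short words in the fifteen generating involutions $\imath_{ij,kl}$, exploiting the special arithmetic of the configuration $(0,1,2,3,6)$. First I would record a usable formula for a generator. A nontrivial involution of $\P^1$ is represented by a trace-zero matrix $\left(\begin{smallmatrix} a & b \\ c & -a\end{smallmatrix}\right)$, and imposing $\imath(p_i)=p_j$ and $\imath(p_k)=p_l$ gives two linear conditions
\[
a(p_i+p_j)+b-c\,p_ip_j = 0, \qquad a(p_k+p_l)+b-c\,p_kp_l = 0,
\]
whose solution up to scale is
\[
\imath_{ij,kl} = \begin{pmatrix} p_ip_j-p_kp_l & p_kp_l(p_i+p_j)-p_ip_j(p_k+p_l) \\ (p_i+p_j)-(p_k+p_l) & -(p_ip_j-p_kp_l) \end{pmatrix}.
\]
In particular $\imath_{ij,kl}(\infty) = \bigl(p_ip_j-p_kp_l\bigr)\big/\bigl((p_i+p_j)-(p_k+p_l)\bigr)$, so $\imath_{ij,kl}$ fixes $\infty$ precisely when $p_i+p_j=p_k+p_l$. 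All entries lie in $\Z\subset K$.

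The organizing observation is that $\sigma$ and $\tau$ both fix $\infty$, so I only need to produce elements of the stabilizer of $\infty$. Since each involution is its own inverse, a product $\imath_1\imath_2$ fixes $\infty$ exactly when $\imath_1(\infty)=\imath_2(\infty)$; I would therefore scan the fifteen values $\imath_{ij,kl}(\infty)$ for coincidences. The configuration $(0,1,2,3,6)$ is arranged to supply several: the relation $0+3=1+2$ makes $\jmath:=\imath_{14,23}\colon z\mapsto 3-z$ fix $\infty$ outright; the involutions swapping $\{0,1\},\{3,6\}$ and swapping $\{1,3\},\{2,6\}$ both send $\infty\mapsto 9/4$; and those swapping $\{0,2\},\{1,3\}$ and swapping $\{0,3\},\{1,6\}$ both send $\infty\mapsto 3/2$. (The product coincidence $1\cdot 6 = 2\cdot 3$ similarly produces the involution $z\mapsto 6/z$, which I expect to be useful in the following step.)

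Assembling the words is then a matter of a few $2\times 2$ multiplications. Let $h$ be the product of the two involutions fixing $9/4$; a direct computation gives $h\colon z\mapsto -3z+9$, and composing with $\jmath$ yields
\[
h\jmath\colon z\mapsto -3(3-z)+9 = 3z, \qquad\text{so}\qquad \tau = h\jmath \in \Gamma_P.
\]
Likewise, if $g$ denotes the product of the two involutions fixing $3/2$, one finds $g\colon z\mapsto z/3+1$, whence
\[
g\tau\colon z\mapsto \tfrac{3z}{3}+1 = z+1, \qquad\text{so}\qquad \sigma = g\tau \in \Gamma_P.
\]
Thus $\tau$ is a product of three generators and $\sigma$ a product of five, and both lie in $\Gamma_P$ with common fixed point $\infty$.

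The statement carries no conceptual difficulty once it is recognized as a finite computation; the genuine obstacle is locating \emph{short} words, since a blind search over products of fifteen involutions is hopeless by hand. What makes it tractable is the criterion that $\imath_1\imath_2$ fixes $\infty$ iff $\imath_1(\infty)=\imath_2(\infty)$, which reduces the problem to spotting the arithmetic coincidences ($0+3=1+2$ and the repeated values $3/2$, $9/4$ of $\imath_{ij,kl}(\infty)$) that the points $0,1,2,3,6$ were chosen to create. I would therefore present the proof as: (i) the involution formula and the value $\imath_{ij,kl}(\infty)$; (ii) the list of the relevant coincidences; and (iii) the two displayed matrix identities exhibiting $\tau$ and $\sigma$.
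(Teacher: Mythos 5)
Your proof is correct --- I checked the matrix identities: with $A=\imath_{12,45}$, $B=\imath_{24,35}$, $\jmath=\imath_{14,23}$, $C=\imath_{13,24}$, $D=\imath_{14,25}$ one indeed gets $A\circ B\colon z\mapsto -3z+9$, hence $A\circ B\circ\jmath = \tau$, and $C\circ D\colon z\mapsto z/3+1$, hence $C\circ D\circ\tau=\sigma$ --- but it proceeds differently from the paper. The paper also exhibits explicit words in the generators ($\sigma=\imath_{12,34}\circ\imath_{13,45}\circ\imath_{13,25}$ and $\tau=\imath_{13,45}\circ\imath_{24,35}\circ\imath_{15,23}$, both of length three), but it never writes down a single matrix: it chases the configuration points $p_1,p_2,p_3$ through each composition using only the defining swap data of the involutions, observes that the images are $1,2,3$ (resp.\ $0,3,6$), and invokes the fact that a M\"obius transformation is determined by its values at three points. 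That verification is shorter and purely combinatorial, and it yields $\sigma$ as a word of length three rather than your five. What your approach buys is a discovery mechanism rather than just a verification: the criterion that $\imath_1\imath_2$ fixes $\infty$ iff $\imath_1(\infty)=\imath_2(\infty)$, together with the closed form $\imath_{ij,kl}(\infty)=(p_ip_j-p_kp_l)/((p_i+p_j)-(p_k+p_l))$, explains exactly which arithmetic coincidences the configuration $(0,1,2,3,6)$ was engineered to produce, whereas the paper's words appear without motivation. Two small points of hygiene: the phrase ``the two involutions fixing $9/4$'' should read ``sending $\infty$ to $9/4$'' (they interchange $\infty$ and $9/4$, fixing neither), and since the two factors do not commute you should specify the order of each product --- your displayed formulas $-3z+9$ and $z/3+1$ correspond to $\imath_{12,45}\circ\imath_{24,35}$ and $\imath_{13,24}\circ\imath_{14,25}$ respectively, and the other order fails (e.g.\ $\imath_{24,35}\circ\imath_{12,45}\circ\jmath$ is $z\mapsto z/3+2$, not $\tau$).
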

\begin{proof}
We claim that \(\sigma = \imath_{12,34} \circ \imath_{13,45} \circ \imath_{13,25}\) and \(\tau = \imath_{13,45} \circ \imath_{24,35} \circ \imath_{15,23}\).  Indeed,
\begin{align*}
&(\imath_{12,34} \circ \imath_{13,45} \circ \imath_{13,25})(p_1) = (\imath_{12,34} \circ \imath_{13,45})(p_3) = \imath_{12,34}(p_1) = p_2, \\
&(\imath_{12,34} \circ \imath_{13,45} \circ \imath_{13,25})(p_2) = (\imath_{12,34} \circ \imath_{13,45})(p_5) = \imath_{12,34}(p_4) = p_3, \\
&(\imath_{12,34} \circ \imath_{13,45} \circ \imath_{13,25})(p_3) = (\imath_{12,34} \circ \imath_{13,45})(p_1) = \imath_{12,34}(p_3) = p_4.
\end{align*}
For the configuration \(P\), this yields \((\imath_{12,34} \circ \imath_{13,45} \circ \imath_{13,25})(0) = 1\), \((\imath_{12,34} \circ \imath_{13,45} \circ \imath_{13,25})(1) = 2\), and \((\imath_{12,34} \circ \imath_{13,45} \circ \imath_{13,25})(2) = 3\), so the composition must be the automorphism \(\sigma\) given by \(z \mapsto z+1\).
Similarly, 
\begin{align*}
&(\imath_{13,45} \circ \imath_{24,35} \circ \imath_{15,23})(p_1) = (\imath_{13,45} \circ \imath_{24,35})(p_5) = \imath_{13,45}(p_3) = p_1, \\
&(\imath_{13,45} \circ \imath_{24,35} \circ \imath_{15,23})(p_2) = (\imath_{13,45} \circ \imath_{24,35})(p_3) = \imath_{13,45}(p_5) = p_4, \\
&(\imath_{13,45} \circ \imath_{24,35} \circ \imath_{15,23})(p_3) = (\imath_{13,45} \circ \imath_{24,35})(p_2) = \imath_{13,45}(p_4) = p_5.
\end{align*}

For the configuration \(P\), this map sends \(0\) to \(0\), \(1\) to \(3\), and \(2\) to \(6\), and so must be the automorphism \(\tau\) given by \(z \mapsto 3z\).
\end{proof}

Let \(S\) be the rational surface constructed in Theorem~\ref{surfaceautos} corresponding to the configuration \(P\) given by Lemma~\ref{specialconfig}, 
so that the image of \(\rho : \Aut(S;C) \to \Aut(C)\) contains the elements \(\sigma\) and \(\tau\).  
Write \(\parauto\) for the subgroup of \(\Aut(S;C)\) given by automorphisms which restrict to \(C\) as parabolic transformations fixing the point \(p\); if coordinates are chosen so that \(p = \infty\), these are the automorphisms 
restricting to translations \(z \mapsto z+c\).  Equivalently, we ask that \(\phi\vert_C\) is given by a unipotent upper triangular matrix \(\left( \begin{smallmatrix} 1 & c \\ 0 & 1 \end{smallmatrix} \right)\).

\begin{lemma}
\label{surfacenonfg}
Let \(p = \infty\) on the curve \(C\).  Then \(\parauto\) is not a finitely generated group.
\end{lemma}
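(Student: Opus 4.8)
The plan is to push the problem through the restriction homomorphism \(\rho \colon \Aut(S;C) \to \Aut(C) \cong \PGL_2(K)\) and to exploit the fact that the additive group \(\dyadic\) is not finitely generated. By definition \(\parauto\) consists of automorphisms restricting to \(C\) as translations \(z \mapsto z+c\), so its image \(\rho(\parauto)\) lands in the abelian subgroup of translations, which I identify with \((K,+)\). It therefore suffices to show that \(\rho(\parauto)\) contains a copy of \(\dyadic\): a homomorphic image of a finitely generated group is finitely generated, and a subgroup of a finitely generated abelian group is again finitely generated, so a non-finitely-generated subgroup of \(\rho(\parauto)\) forces \(\parauto\) itself to fail to be finitely generated.

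To produce the relevant translations I would use the two elements supplied by Lemma~\ref{specialconfig}. Since the image of \(\rho\) contains \(\Gamma_P\) by Theorem~\ref{surfaceautos}(3), I may choose lifts \(\widehat\sigma, \widehat\tau \in \Aut(S;C)\) with \(\rho(\widehat\sigma) = \sigma \colon z \mapsto z+1\) and \(\rho(\widehat\tau) = \tau \colon z \mapsto 3z\). The key computation is the conjugation
\[
\rho\bigl(\widehat\tau^{\,n} \widehat\sigma\, \widehat\tau^{\,-n}\bigr) = \tau^n \sigma \tau^{-n} \colon z \longmapsto z + 3^n,
\]
valid for every \(n \in \Z\). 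Each such conjugate is again parabolic, and its fixed point is \(\tau^n(\infty) = \infty = p\) because \(\tau\) fixes \(\infty\); hence \(\widehat\tau^{\,n} \widehat\sigma\, \widehat\tau^{\,-n}\) genuinely lies in \(\parauto\). Consequently \(\rho(\parauto)\) contains the translation by \(3^n\) for every \(n \in \Z\), and therefore contains the subgroup these generate, namely translation by the additive group \(\langle 3^n : n \in \Z \rangle = \dyadic \subset K\).

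It then remains only to observe that \(\dyadic\) is not finitely generated as an abelian group: any finite subset has denominators bounded by some power \(3^N\), so generates a subgroup of \(3^{-N}\Z\), which cannot contain \(3^{-N-1}\). Since \(\rho(\parauto)\) is a subgroup of the abelian group \((K,+)\) containing the non-finitely-generated subgroup \(\dyadic\), it is itself non-finitely-generated, and hence so is \(\parauto\).

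The argument is essentially forced once the conjugation trick is in hand; the one point requiring genuine care is the verification that the conjugates \(\widehat\tau^{\,n}\widehat\sigma\,\widehat\tau^{\,-n}\) remain inside \(\parauto\) — that is, that they restrict to parabolic maps fixing the \emph{chosen} point \(p=\infty\) rather than some other point. This is precisely what makes \(p = \infty\), the common fixed point of \(\sigma\) and \(\tau\), the correct choice, and it is the reason Lemma~\ref{specialconfig} was arranged to place both generators in a single upper-triangular (parabolic-plus-diagonal) configuration.
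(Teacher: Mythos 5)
Your proof is correct and follows essentially the same route as the paper: both push the problem through \(\rho\), use the conjugates of \(\sigma\) by powers of \(\tau\) to produce a copy of \(\dyadic\) inside the abelian group \(\rho(\parauto)\), and then invoke the facts that subgroups of finitely generated abelian groups are finitely generated and that quotients of finitely generated groups are finitely generated. The only cosmetic difference is the direction of conjugation (\(\tau^{n}\sigma\tau^{-n}\) giving translation by \(3^{n}\) versus the paper's \(\tau^{-n}\sigma\tau^{n}\) giving translation by \(3^{-n}\)), which is immaterial since \(n\) ranges over all of \(\Z\).
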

\begin{proof}
An automorphism \(\phi\) in \(\Aut(S;C)\) lies in \(\parauto\) if and only if \(\rho(\phi)\) lies in the subgroup \(B \subset \PGL_2(K)\) comprising matrices of the form
\[
\begin{pmatrix} 1 & c \\ 0 & 1 \end{pmatrix},
\]
which correspond to parabolic M\"obius transformations \(z \mapsto z+c\).
The subgroup \(B\) is abelian, isomorphic to \(\mathbb G_a\); since \(\rho(\parauto)\) is contained in \(B\), this group is abelian as well.
For any integer \(n\), the transformation
\[
\tau^{-n} \circ \sigma \circ \tau^{n} = \begin{pmatrix} 1 & \nicefrac{1}{3^{n}} \\ 0 & 1 \end{pmatrix}
\]
is contained in \(B\).  Since \(\sigma\) and \(\tau\) both lie in \(\Im(\rho)\) by
the construction of Theorem~\ref{surfaceautos}, the elements \(\tau^{-n} \circ \sigma \circ \tau^n\) all lie in \(\rho(\parauto)\), and so \(\rho(\parauto)\) has a subgroup isomorphic to \(\dyadic\).  Since \(\rho(\parauto)\) is abelian and has a non-finitely generated subgroup, it is not finitely generated. A quotient of a finitely generated group is finitely generated, and we conclude that \(\parauto\) itself is not finitely generated.
\end{proof}

The following geometric characterization of elements of \(\parauto\) will prove useful.  Let \(\Delta_S : S \to S \times S\) be the diagonal map.

\begin{lemma}
\label{fixingp}
Suppose that \(\phi : S \to S\) is an automorphism fixing \(p\).  Then \(\phi\) fixes \(C\) as well.  Furthermore, \(\phi\) lies in \(\parauto\) if and only if \(\id_S \times \phi : S \times S \to S \times S\) fixes the tangent direction \(T_{\Delta_S(p)}(\Delta_S(C))\).
\end{lemma}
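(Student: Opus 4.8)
The plan is to treat the two assertions separately. For the first, that an automorphism $\phi$ fixing $p$ must preserve $C$, I would invoke the structure established in the proof of Theorem~\ref{surfaceautos}: the unique member of $\abs{-2K_S}$ is the union of the strict transforms of the six lines $L_0,\dots,L_5$, and since any automorphism preserves the anticanonical class, $\phi$ permutes these six curves. The key observation is that the marked point $p = \infty$ lies on exactly one of them, namely $C = L_0$. Indeed, in $\P^2$ the line $L_i$ meets $C$ only at $p_i$ for $i = 1,\dots,5$, and $p$ is distinct from each of $p_1,\dots,p_5 = 0,1,2,3,6$; hence $p$ lies on no $L_i$ with $i \geq 1$, and the same holds for their strict transforms on $S$. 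Because $\phi$ fixes $p$ and permutes the six lines, the line $\phi(C)$ again passes through $p$, forcing $\phi(C) = C$.

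For the second assertion I would reduce the tangent-direction condition to a statement about the derivative of $\phi\vert_C$ at $p$. Having shown $\phi(C) = C$ and $\phi(p) = p$, the restriction $\phi\vert_C \in \Aut(C)$ fixes $p = \infty$, hence has the form $z \mapsto az + b$; by definition $\phi$ lies in $\parauto$ precisely when $a = 1$, that is, when $\phi\vert_C$ is unipotent. Working in the local coordinate $w = 1/z$ centered at $p$, a direct computation gives that the derivative of $\phi\vert_C$ at $p$ equals $1/a$, so membership in $\parauto$ is equivalent to the condition that $\phi\vert_C$ act as the identity on the tangent line $T_p C$.

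It remains to match this with the action of $\id_S \times \phi$ on $T_{\Delta_S(p)}(\Delta_S(C))$. Since $\Delta_S(C) = \set{(x,x) : x \in C}$, its tangent line at $\Delta_S(p) = (p,p)$ is the diagonal $\set{(u,u) : u \in T_p C}$ inside $T_p S \oplus T_p S$. The differential of $\id_S \times \phi$ at $(p,p)$ sends $(v,w) \mapsto (v, d\phi_p(w))$, so it carries $(u,u)$ to $(u, d\phi_p(u)) = (u, \mu u)$, where $\mu$ is the derivative of $\phi\vert_C$ at $p$ (using that $\phi(C)=C$ forces $d\phi_p$ to preserve $T_p C$). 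This vector spans the diagonal again if and only if $\mu = 1$, which by the previous paragraph is exactly the condition $\phi \in \parauto$.

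The computations here are routine linear algebra on tangent spaces; the step requiring the most care is the first, where one must verify that the chosen point $p$ is disjoint from the five auxiliary lines so that preserving $p$ genuinely pins down $C$ among the anticanonical components. The only mild subtlety in the second part is keeping track of the inversion $a \mapsto 1/a$ arising from the coordinate change at $\infty$, which does not affect the equivalence since $a = 1$ if and only if $1/a = 1$.
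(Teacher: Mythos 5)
Your proposal is correct and follows essentially the same route as the paper: the first claim via the permutation of the six components of $\abs{-2K_S}$ together with the fact that $p$ lies only on $C$, and the second by identifying membership in $\parauto$ with the condition that $\phi\vert_C$ act trivially on $T_pC$ (the paper phrases this as $p$ being a fixed point of $\phi\vert_C$ of multiplicity $2$, which is the same condition you verify by the derivative computation at $\infty$). Your version simply spells out the tangent-space linear algebra that the paper leaves implicit.
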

\begin{proof}
Any automorphism of \(S\) permutes the components of \(\abs{-2K_S}\), which are the strict transforms of the six lines \(L_i\).  The only component containing \(p\) is \(C\) itself, and so \(C\) must be invariant under \(\phi\).

An automorphism fixing \(C\) and \(p\) lies in \(\parauto\) if and only if \(p\) is a fixed point of \(\phi\vert_C\) with multiplicity \(2\), which is the case if and only if \(\id_S \times \phi : S \times S \to S \times S\) fixes \(\Delta_S(p)\) and the tangent direction \(T_{\Delta_S(p)}(\Delta_S(C))\), so that \((\id_S \times \phi)(\Delta_S(C))\) is tangent to the diagonal at \(\Delta_S(p)\).
\end{proof}

\begin{remark}
\label{notinjective}
Let \(\bar{\sigma}\) and \(\bar{\tau}\) be automorphisms of \(S\) which restrict to \(C\) as \(\sigma\) and \(\tau\), as constructed in Theorem~\ref{surfaceautos}.  Although the restrictions to \(C\) of the automorphisms \(\mu_m = \bar{\tau}^{-m} \circ \bar{\sigma} \circ \bar{\tau}^m\) and \(\mu_n = \bar{\tau}^{-n} \circ \bar{\sigma} \circ \bar{\tau}^n\) commute and satisfy \(\mu_{n-1}\vert_C \circ \mu_n\vert_C^{-3} = \id_C\), these maps do not commute as automorphisms of \(S\), and the map \(\Aut(S;C) \to \Aut(C)\) is not injective.  For example, the commutator \([\mu_0,\mu_1]\) is an automorphism of \(S\) which restricts to \(C\) as the identity, but a straightforward if somewhat tedious computation of the action of the involutions \(\imath_R\) on \(\NS(S)\) shows that the induced map \([\mu_0,\mu_1] : \P^2 \rat \P^2\) is a Cremona transformation of degree \(1,944,353\) with first dynamical degree \(\lambda_1 \approx 752,897\).
It seems conceivable that \(\parauto\) is a free group on the countably many generators \(\mu_n\), though this is difficult to prove.
\end{remark}

\begin{remark}
\label{badsurface}
The kernel \(G\) of \(\Aut(S;C) \to \Aut(C)\) is also of interest: this is the subgroup of automorphisms which fix \(C\) pointwise, including the maps \([\mu_m,\mu_n]\) of the remark above.   It seems likely that \(G\) is not finitely generated; if this is the case, then by choosing a very general point \(q\) on \(C\), we might obtain a rational surface \(S^\prime = \Bl_q S\) such that \(\Aut(S^\prime)\) is isomorphic to \(G\) and is not finitely generated.  However, it is not clear how to prove either that \(G\) is not finitely generated, or that the blow-up does not admit automorphisms other than those lifted from \(S\).
\end{remark}

\section*{Step 3: A variety with non-finitely generated $\Aut(X)$}

We now construct a higher-dimensional variety \(X\) realizing \(\parauto\) as \(\Aut(X)\).  Although \(\parauto\) is not the stabilizer of any closed subscheme of \(S\), it is the stabilizer of a closed subscheme of \(S \times S\) under the group of automorphisms of \(S \times S\) of the form \(\id_S \times \phi\): an automorphism \(\phi\) lies in \(\parauto\) and only if \(\id_S \times \phi\) fixes both \(\Delta_S(p)\) and the tangent direction \(T_{\Delta_S(p)}(\Delta_S(C))\).  Our variety \(X\) will be realized as a blow-up of \(S \times S \times T\), where \(T\) is a surface of general type; taking the product with \(T\) makes it simpler to control automorphisms of blow-ups.

We begin with a lemma enabling us to show that a blow-up \(\Bl_V X\) has no automorphisms except those that lift from \(X\).  Say that a variety \(X\) is \emph{\(\P^r\)-averse} if every \(\Kbar\)-morphism \(h : \P^r_{\Kbar} \to X_{\Kbar}\) is constant.
Note that if \(X\) is \(\P^r\)-averse, it is also \(\P^s\)-averse for any \(s > r\).

\begin{lemma}
\label{noextraautos}
Suppose that \(X\) is a \(\P^{r-1}\)-averse variety of dimension \(n\), and \(V \subset X\) is a smooth, equidimensional subvariety of codimension \(r\).  Write \(\pi : \Bl_V X \to X\) for the blow-up, with exceptional divisor \(E\).  Then every automorphism of \(\Bl_V X\) descends to an automorphism of \(X\), and the induced map \(\Aut(\Bl_V X) \to \Aut(X)\) is an isomorphism onto \(\Stab(V)\).
\end{lemma}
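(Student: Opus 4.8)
The plan is to exploit the geometry of the exceptional divisor. Since \(V\) is smooth of codimension \(r\), the normal sheaf \(N_{V/X}\) is locally free of rank \(r\) and \(E = \P(N_{V/X})\) is a \(\P^{r-1}\)-bundle \(p : E \to V\); its fibers are exactly the positive-dimensional fibers of the blow-down \(\pi\). The entire argument rests on one point: \(\P^{r-1}\)-aversion forces every automorphism of \(\Bl_V X\) to carry \(E\) to itself. I would first discard the trivial case \(r = 1\) (where \(\pi\) is an isomorphism) and assume \(r \geq 2\), and I would verify all geometric claims after base change to \(\Kbar\), since aversion is a hypothesis about \(\Kbar\)-morphisms.

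The main step is to show \(\psi(E) = E\) for \(\psi \in \Aut(\Bl_V X)\). Let \(F \cong \P^{r-1}_{\Kbar}\) be a geometric fiber of \(p\). The composite \(\pi \circ \psi\) restricts on \(F\) to a \(\Kbar\)-morphism \(\P^{r-1}_{\Kbar} \to X_{\Kbar}\), which is constant because \(X\) is \(\P^{r-1}\)-averse. Hence \(\psi(F)\) is contained in a single fiber of \(\pi\). But \(\psi\) is an isomorphism, so \(\psi(F)\) is irreducible of dimension \(r-1 \geq 1\), while the only positive-dimensional fibers of \(\pi\) are the fibers of \(p\) over points of \(V\), each \(\cong \P^{r-1}\). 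Therefore \(\psi(F)\) is one such fiber, so \(\psi(E) \subseteq E\); applying the same reasoning to \(\psi^{-1}\) yields \(\psi(E) = E\).

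Next I would descend \(\psi\) across \(\pi\). The morphism \(g = \pi \circ \psi\) contracts every fiber of \(\pi\): the points lying over \(X \setminus V\) go to points, and each \(\P^{r-1}\)-fiber over \(V\) is contracted by the computation above. As \(\pi\) is proper with connected fibers and \(\pi_* \mathcal{O}_{\Bl_V X} = \mathcal{O}_X\) (the center \(V\) is smooth, so this is the standard blow-up, and \(X\) is normal in the intended applications), the rigidity lemma produces a morphism \(\bar\psi : X \to X\) with \(\pi \circ \psi = \bar\psi \circ \pi\); running this for \(\psi^{-1}\) gives an inverse, so \(\bar\psi \in \Aut(X)\). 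Since \(\pi(E) = V\), we get \(\bar\psi(V) = \bar\psi(\pi(E)) = \pi(\psi(E)) = V\), so \(\bar\psi \in \Stab(V)\), which defines the homomorphism \(\Aut(\Bl_V X) \to \Stab(V)\). For injectivity, if \(\bar\psi = \id_X\) then \(\pi \circ \psi = \pi\), so \(\psi\) is the identity on the dense open \(\Bl_V X \setminus E\) on which \(\pi\) is an isomorphism, hence \(\psi = \id\) as \(\Bl_V X\) is reduced and separated. For surjectivity onto \(\Stab(V)\), any \(f \in \Stab(V)\) preserves the ideal sheaf \(I_V\), so the universal property of the blow-up lifts \(f\) to \(\tilde f\) with \(\pi \circ \tilde f = f \circ \pi\); the lift of \(f^{-1}\) is its inverse, so \(\tilde f \in \Aut(\Bl_V X)\) maps to \(f\).

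I expect the crux to be the preservation of \(E\), which is precisely where \(\P^{r-1}\)-aversion is used in an essential way; everything afterward is formal. Two points nonetheless require care: aversion must be applied to the \emph{geometric} fibers over \(\Kbar\) (after which \(\bar\psi\) descends to \(K\) by uniqueness of the factorization through \(\pi\)), and one must confirm \(\pi_* \mathcal{O}_{\Bl_V X} = \mathcal{O}_X\) so that the rigidity lemma is available. Neither is serious for the smooth centers and ambient varieties arising in our construction.
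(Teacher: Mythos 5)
Your argument is correct and follows the same route as the paper: the key step in both is that \(\P^{r-1}\)-aversion of \(X\) forces any automorphism of \(\Bl_V X\) to permute the geometric fibers of \(\pi\vert_{E_{\Kbar}}\), hence to preserve \(E\) and descend. You simply spell out the descent (rigidity, the universal property of the blow-up for surjectivity, and injectivity) in more detail than the paper does.
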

\begin{proof}
 We first observe that any nonconstant morphism \(h : \P^{r-1}_{\Kbar} \to \Bl_V X_{\Kbar}\) must have image contained in a geometric fiber of \(\pi\vert_{E_{\Kbar}}\).  Indeed, \(\pi \circ h : \P^{r-1}_{\Kbar} \to X_{\Kbar}\) must be constant since \(X\) is \(\P^{r-1}\)-averse, and so the image of \(h\) is contained in a geometric fiber.

Suppose that \(\phi : \Bl_V X \to \Bl_V X\) is an automorphism, and let \(h : \P^{r-1}_{\Kbar} \to \Bl_V X_{\Kbar}\) be the inclusion of a geometric fiber of \(\pi\vert_{E_{\Kbar}}\).  Then \(\phi \circ h\) is an inclusion from \(\P^{r-1}_{\Kbar} \to \Bl_V X_{\Kbar}\), and so must be the inclusion of some fiber of \(\pi\vert_{E_{\Kbar}}\).  Thus \(\phi\) permutes the fibers of \(\pi\vert_{E_{\Kbar}}\), and so descends to an automorphism of \(X\) fixing \(\pi(E) = V\).
\end{proof}

\begin{lemma}
\label{nopns} \leavevmode
\begin{enumerate}
\item Suppose that \(X_1\) and \(X_2\) are \(\P^r\)-averse.  Then \(X_1 \times X_2\) is \(\P^r\)-averse.
\item Suppose that \(X\) is \(\P^r\)-averse and \(V \subset X\) is a smooth, geometrically connected subvariety of codimension \(s \leq r\).  Then \(\Bl_V X\) is \(\P^r\)-averse.
\end{enumerate}
\end{lemma}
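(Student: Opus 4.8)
The plan is to reduce each statement to the defining property of $\P^r$-averseness by producing, from a hypothetical map out of $\P^r$, a map to one of the given averse varieties. For part (1), I would take an arbitrary morphism $h : \P^r_{\Kbar} \to (X_1 \times X_2)_{\Kbar}$ and compose with the two projections. Since the formation of a product commutes with base change we have $(X_1 \times X_2)_{\Kbar} = X_{1,\Kbar} \times X_{2,\Kbar}$, so $p_1 \circ h$ and $p_2 \circ h$ are morphisms $\P^r_{\Kbar} \to X_{i,\Kbar}$, hence constant by hypothesis. A morphism to a product is determined by its two projections, so $h$ is constant. This part is essentially the universal property of the product and requires no real work.

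For part (2) I would first pass to $\Kbar$, using that blowing up commutes with the flat base change to $\Kbar$, so that $(\Bl_V X)_{\Kbar} = \Bl_{V_{\Kbar}}(X_{\Kbar})$ and I may argue geometrically. Write $\pi : \Bl_V X \to X$ for the blow-up with exceptional divisor $E$; since $V$ is smooth of pure codimension $s$, the map $\pi\vert_E : E \to V$ is a $\P^{s-1}$-bundle, exactly as in the setting of Lemma~\ref{noextraautos}. Given any morphism $h : \P^r_{\Kbar} \to (\Bl_V X)_{\Kbar}$, the composite $\pi \circ h$ is constant because $X$ is $\P^r$-averse, so the image of $h$ lies in a single geometric fiber of $\pi$. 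A fiber over a point of $X \setminus V$ is a reduced point, which forces $h$ to be constant; a fiber over a point of $V$ is a copy of $\P^{s-1}$.

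It then remains to show that every morphism $\P^r_{\Kbar} \to \P^{s-1}_{\Kbar}$ is constant, which I would deduce from the standard fact that there is no nonconstant morphism $\P^n \to \P^m$ when $n > m$: such a map is given by $m+1$ homogeneous forms of a common degree $d \geq 1$ with empty common zero locus, whereas the intersection of $m+1$ hypersurfaces in $\P^n$ has every component of dimension at least $n - (m+1) \geq 0$ once $n > m$, so the base locus cannot be empty. The hypothesis $s \leq r$ gives precisely $s - 1 < r$, so every such map is constant and therefore $h$ is constant. I expect the only genuine content to be this constancy statement together with the identification of the fibers of $\pi$ as $\P^{s-1}$; the inequality $s \leq r$ is exactly what keeps the fiber dimension below $r$, so that the dimension count on the base locus applies.
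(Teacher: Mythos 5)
Your proof is correct and takes essentially the same approach as the paper: for (1) compose with the two projections, and for (2) use the $\P^r$-averseness of $X$ to confine the image of $h$ to a geometric fiber of $\pi\vert_E$, which is a $\P^{s-1}$ with $s-1<r$. The only difference is that you additionally spell out the standard dimension count showing that every morphism $\P^r \to \P^{s-1}$ is constant, a fact the paper simply asserts.
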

\begin{proof}
For (1), suppose that \(h : \P^r_{\Kbar} \to X_{1,\Kbar} \times X_{2,\Kbar}\) is a morphism.  Then the projections \(p_1 \circ h :  \P^r_{\Kbar} \to X_{1,\Kbar}\) and \(p_2 \circ h : \P^r_{\Kbar} \to X_{2,\Kbar}\) must both be constant, so that \(h\) is constant. For (2), the map \(\pi \circ h\) must be constant, and so if \(h\) is nonconstant, its image is contained in a fiber of \(\pi\vert_{E_{\Kbar}}\).  These fibers are isomorphic to \(\P^{s-1}_{\Kbar}\), and since \(s-1 < r\), the map \(h\) must be constant.
\end{proof}

We require one more simple lemma before proceding to the construction.
\begin{lemma}
\label{veryample}
Suppose that \(X\) is a smooth projective variety with \(\Aut(X)\) discrete.  There exists a divisor \(G \subset X\) for which \(\Aut(X;G)\) is trivial.
\end{lemma}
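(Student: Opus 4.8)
The plan is to produce $G$ as a sufficiently general member of a very ample linear system, after first cutting the potentially infinite group $\Aut(X)$ down to a finite subgroup by fixing a polarization. Fix an ample divisor $H$ on $X$ and an integer $m$ large enough that $mH$ is very ample, giving a closed embedding $X \hookrightarrow \P^N$ with $\mathcal{O}_{\P^N}(1)|_X \cong \mathcal{O}_X(mH)$. Let $G_0 \subset \Aut(X)$ be the subgroup of automorphisms $\phi$ satisfying $\phi^* \mathcal{O}_X(mH) \cong \mathcal{O}_X(mH)$. The first key step is to show that $G_0$ is \emph{finite}: any such $\phi$ acts projectively on $H^0(X,mH)$ and hence extends to a linear automorphism of $\P^N$ preserving $X$, which identifies $G_0$ with the stabilizer $\Stab_{\PGL_{N+1}(K)}(X)$. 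This is a closed algebraic subgroup of $\PGL_{N+1}$, in particular a group scheme of finite type, and its identity component is contained in $\Aut^0(X)$, which is trivial because $\Aut(X)$ is discrete. A finite-type group scheme of dimension zero has only finitely many geometric points, so $G_0$ is finite.

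Next I would select $G$ inside the linear system $\abs{mH} \cong \P^M$. For each $\phi \in G_0$ with $\phi \neq \id$, the locus $\set{D \in \abs{mH} : \phi(D) = D}$ is a proper closed subvariety: the induced projective-linear action of $\phi$ on $\abs{mH}$ has fixed locus a union of linear subspaces, and this locus is all of $\abs{mH}$ only when $\phi$ acts trivially on $\P^N$, that is, only when $\phi = \id$. Since $G_0$ is finite, the union of these fixed loci over the finitely many nonidentity elements of $G_0$ is again a proper closed subvariety of $\P^M$. Because $K$ has characteristic $0$ and is therefore infinite, $\P^M(K)$ is not contained in any proper closed subvariety, so there is a $K$-rational member $G \in \abs{mH}$ lying outside all of them; by construction $\phi(G) \neq G$ for every $\phi \in G_0 \setminus \set{\id}$.

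It then remains to verify that $\Aut(X;G) = \set{\id}$. If $\phi(G) = G$, then $\phi^* \mathcal{O}_X(G) \cong \mathcal{O}_X(G) \cong \mathcal{O}_X(mH)$, so $\phi$ lies in $G_0$; but $G$ was chosen precisely so that no nonidentity element of $G_0$ preserves it, forcing $\phi = \id$. Thus $G$ is the desired divisor.

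I expect the finiteness of $G_0$ to be the main substantive point, since this is where the hypothesis that $\Aut(X)$ is discrete enters decisively: it is exactly the vanishing of $\Aut^0(X)$ that collapses the finite-type polarized automorphism group to a finite set. A secondary subtlety, which this finiteness conveniently dissolves, is that $K$ may be countable, so a naive appeal to a ``very general'' member would be illegitimate; once $G_0$ is known to be finite, however, the genericity step only requires avoiding finitely many proper closed subsets, which is possible over any infinite field.
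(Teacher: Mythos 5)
Your proof is correct and follows essentially the same strategy as the paper: take a general member of a very ample linear system, using the finiteness of the subgroup preserving that system together with the observation that only the identity can fix every member (the paper phrases this as $x = \bigcap_{G \ni x} G$, you phrase it via the projective-linear action on $\abs{mH}$). The only real difference is that you establish finiteness directly via the stabilizer of $X$ in $\PGL_{N+1}$, whose identity component lands in the trivial group $\Aut^0(X)$, whereas the paper cites the Lieberman--Fujiki theorem; your version is self-contained and also makes explicit the $K$-rationality of the chosen divisor, which the paper leaves implicit.
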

\begin{proof}
Choose a very ample linear system \(\mathcal G \cong \P^N\) on \(X\).  By the Lieberman--Fujiki theorem, the subgroup \(\Aut(X;\mathcal G)\) of automorphisms fixing \(\mathcal G\) is of finite type, and hence finite since \(\Aut(X)\) is assumed discrete.  If \(\phi\) is any member of \(\Aut(X,\mathcal G)\) other than the identity, it can not act trivially on \(\mathcal G\).
Indeed, suppose that \(\phi\) fixes every element of \(\mathcal G\).  If \(x\) is any point of \(X\), then \(x = \bigcap_{G \ni x} G\), and so \(x\) is fixed by \(\phi\).  It follows that \(\phi\) is the identity map.  As a consequence, a general element of \(\mathcal G\) is not fixed by any automorphisms.
\end{proof}

Let \(T\) be a smooth, geometrically simply connected surface over \(K\) for which \(\Aut(T)\) is trivial, \(T\) is not geometrically uniruled, and there is at least one \(K\)-point \(t\) on \(T\); according to~\cite{poonen}, we can take \(T\) to be the hypersurface in \(\P^3\) defined by \(x_0^5 + x_0 x_1^4 + x_1 x_2^4 + x_2 x_3^4 + x_3^5\), which has the point \([0,1,0,0]\).
(Note that if we work over \(K = \C\) or any other uncountable field, then any very general hypersurface in \(\P^3\) of degree at least \(4\) suffices.)

Take \(X_0 = S \times S \times T\).  The variety \(X\) will be constructed by a sequence of four blow-ups of \(X_0\).  In each case, the blowup satisfies the hypotheses of Lemma~\ref{noextraautos}, so we may identify its automorphism group with a subgroup of \(\Aut(X_0)\). 

\begin{lemma}
\label{blowupsequence}
Let \(X_0 = S \times S \times T\).  
Fix a point \(s\) on \(S\) and a divisor \(G\) on \(S\) with \(\Aut(S;G)\) trivial, as in Lemma~\ref{veryample}.  Choose three distinct smooth, geometrically connected curves \(\Gamma_1\), \(\Gamma_2\), and \(\Gamma_3\) in \(T\), and a point \(t\) on \(\Gamma_3\) which does not lie on \(\Gamma_1\) or \(\Gamma_2\).
\begin{enumerate}
\item The variety \(X_0\) is \(\P^r\)-averse for any \(r \geq 2\).  The automorphisms of \(X_0\) are of the form \(\Aut(S \times S) \times \id_T\).
\item Let \(\pi_1 : X_1 \to X_0\) be the blow-up of \(X_0\) along \(s \times S \times \Gamma_1\).  The variety \(X_1\) is \(\P^r\)-averse for any \(r \geq 3\).  The automorphisms of \(X_1\) are all lifts of \(\Aut(S;s) \times \Aut(S) \times \id_T\).
\item Let \(\pi_2 : X_2 \to X_1\) be the blow-up along the strict transform of \(G \times p \times \Gamma_2\).  The variety \(X_2\) is \(\P^r\)-averse for \(r \geq 4\).  The automorphisms of \(X_2\) are given by \(\id_S \times \Aut(S;p) \times \id_T\).
\item Let \(\pi_3 : X_3 \to X_2\) be the blow-up along the strict transform of \(p \times p \times \Gamma_3\).  Then \(X_3\) is \(\P^r\)-averse for \(r \geq 5\), and the automorphisms of \(X_3\) are of the form \(\id_S \times \Aut(S;p) \times \id_T\).
\item Let \(E_3\) be the exceptional divisor of \(\pi_3 : X_3 \to X_2\).  Then \(\Delta_S(C) \times t\) meets \(E_3\) at a single point \(u\).  Let \(\pi_4 : X_4 \to X_3\) be the blow-up at \(u\).    The automorphism group of \(X_4\) is isomorphic to \(\id_S \times \parauto \times \id_T\).
\end{enumerate}
\end{lemma}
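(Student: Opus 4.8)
The plan is to establish the five parts in order using two engines at every stage: Lemma~\ref{noextraautos} to identify $\Aut(X_{i+1})$ with the stabilizer inside $\Aut(X_i)$ of the (strict transform of the) center, and Lemma~\ref{nopns} to propagate $\P^r$-averseness across products and blow-ups. Two bookkeeping observations make this run smoothly. First, each successive center has codimension exactly one more than the averseness threshold of the space being blown up: $X_i$ will be $\P^{i+2}$-averse and the $(i+1)$-st center has codimension $i+3$, so the hypothesis ``$X_i$ is $\P^{(\mathrm{codim}-1)}$-averse'' of Lemma~\ref{noextraautos} holds at each step, while Lemma~\ref{nopns}(2) raises the threshold by one. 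Second, I choose the point $s$, the divisor $G$, and the curves $\Gamma_1,\Gamma_2,\Gamma_3$ in general position (in particular $s\neq p$, and $t\notin\Gamma_1\cup\Gamma_2$ is already assumed) so that the successive centers are smooth, equidimensional, geometrically connected, and meet transversally; this guarantees that the strict transforms remain smooth and that the hypotheses of Lemma~\ref{noextraautos} are met throughout.

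For part (1) I must check that $S$ is $\P^2$-averse. A nonconstant morphism $\P^2\to S$ cannot have one-dimensional image, since there is no nonconstant morphism from $\P^2$ to a curve; hence it is dominant and generically finite, and as $\P^2$ contains no curve of negative self-intersection it contracts nothing, so it is finite. But a finite surjective morphism of smooth projective surfaces induces an injection $\NS(S)_{\Q}\hookrightarrow\NS(\P^2)_{\Q}$, forcing $\rho(S)\leq\rho(\P^2)=1$, which is absurd. Thus $S$ is $\P^2$-averse; $T$ is $\P^r$-averse for $r\geq 2$ because a dominant image of $\P^r$ would make $T$ uniruled, contradicting that it is of general type; and Lemma~\ref{nopns}(1) gives that $X_0=S\times S\times T$ is $\P^r$-averse for $r\geq 2$. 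For the automorphism group, the projection $X_0\to T$ is the maximal rationally connected fibration (the fibers $S\times S$ are rationally connected, $T$ is not uniruled), hence canonical; every automorphism descends to $\Aut(T)=\id_T$, and since the fiber action lands in the discrete group $\Aut(S\times S)$ and $T$ is connected it is constant. This gives $\Aut(X_0)=\Aut(S\times S)\times\id_T$.

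Parts (2)--(4) are then stabilizer computations. By Lemma~\ref{noextraautos} each $\Aut(X_{i+1})$ is the stabilizer of the center in $\Aut(X_i)$, and preservation of the strict transform is equivalent to preservation of the original center by the descended automorphism. In part (2), preserving $s\times S\times\Gamma_1$ excludes any interchange of the two $S$-factors (which would carry a fiber of the first projection to a fiber of the second) and forces $\phi_1(s)=s$, leaving $\Aut(S;s)\times\Aut(S)\times\id_T$. In part (3), preserving $G\times p\times\Gamma_2$ forces the first coordinate into $\Aut(S;G)$, which is trivial by the choice of $G$ in Lemma~\ref{veryample}, and the second into $\Aut(S;p)$, leaving $\id_S\times\Aut(S;p)\times\id_T$. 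In part (4) every element of $\id_S\times\Aut(S;p)\times\id_T$ already preserves $p\times p\times\Gamma_3$, so the automorphism group is unchanged; this blow-up exists only to produce the divisor $E_3$ needed for the final step.

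The crux is part (5). I first verify the intersection claim: since $p\in C$ and $t\in\Gamma_3$, the curve $\Delta_S(C)\times t$ meets the center $p\times p\times\Gamma_3$ only at $(p,p,t)$, and transversally, as the tangent line of $\Delta_S(C)\times t$ there is spanned by $(v,v,0)$ with $v$ tangent to $C$, whereas the center's tangent is $(0,0,w)$. Because $(p,p,t)$ lies off the earlier centers ($s\neq p$ and $t\notin\Gamma_2$), the local geometry near this point is that of blowing up $p\times p\times\Gamma_3$ in $X_0$, and the strict transform of $\Delta_S(C)\times t$ meets $E_3$ in the single point $u$ recording the image of the direction $(v,v,0)$ in the projectivized normal space. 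As $u$ has codimension $6$ and $X_3$ is $\P^5$-averse, Lemma~\ref{noextraautos} identifies $\Aut(X_4)$ with the stabilizer of $u$ in $\id_S\times\Aut(S;p)\times\id_T$. An element $(\id,\phi,\id)$ fixes $(p,p,t)$ and acts on the normal direction by $\id\oplus d\phi$, so it fixes $u=[(v,v,0)]$ exactly when $d\phi(v)=v$; by Lemma~\ref{fixingp} this is precisely the condition $\phi\in\parauto$. Therefore $\Aut(X_4)=\id_S\times\parauto\times\id_T$. The main obstacle is this final identification: it requires both pinning down the single point $u$ via transversality and translating, through Lemma~\ref{fixingp}, the stabilization of the tangent direction of $\Delta_S(C)$ at $\Delta_S(p)$ into the parabolicity of $\phi\vert_C$ at $p$.
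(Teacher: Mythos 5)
Your overall strategy is the paper's: the same alternation of Lemma~\ref{noextraautos} and Lemma~\ref{nopns} with the codimension/averseness bookkeeping, the same stabilizer chain through parts (2)--(4), and the same analysis in part (5) identifying the stabilizer of \(u\) with the automorphisms fixing the tangent direction \(T_{\Delta_S(p)}(\Delta_S(C))\) and invoking Lemma~\ref{fixingp}. The small variants you introduce are harmless: the N\'eron--Severi injection for \(\P^2\)-averseness of \(S\) is exactly the paper's argument (the detour through finiteness is unnecessary, since generically finite already gives injectivity of \(h^\ast\) by the projection formula), and using the MRC fibration to descend automorphisms of \(X_0\) to \(T\) is a legitimate substitute for the paper's rigidity-lemma argument.

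There is, however, one genuine gap, in part (2). You assert that preserving \(s \times S \times \Gamma_1\) ``excludes any interchange of the two \(S\)-factors \ldots leaving \(\Aut(S;s)\times\Aut(S)\times\id_T\).'' This presupposes that every element of \(\Aut(S\times S)\) is a product of automorphisms of the factors, possibly composed with the swap --- but that structure theorem is neither obvious nor proved anywhere in your argument, and without it you have not excluded exotic automorphisms of \(S\times S\) stabilizing \(s\times S\). The paper closes exactly this hole: if \(\phi\) stabilizes the fiber \(s\times S\), then \(p_1\circ\phi\) contracts that fiber, hence by the rigidity lemma contracts every fiber of \(p_1\) and descends to some \(\phi_1\in\Aut(S)\) with \(p_1\circ\phi = \phi_1\circ p_1\); composing with \(\phi_1^{-1}\) on the base yields a fiberwise automorphism, which is of the form \(\id_S\times\phi_2\) because \(\Aut(S)\) is discrete and the base is connected. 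Since the entire point of the construction is that \(\Aut(X_4)\) is \emph{no larger} than \(\parauto\), this upper bound on the stabilizer is load-bearing and cannot be waved through; you should supply the rigidity argument (you already use its two ingredients elsewhere in part (1), so the fix is short).
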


\begin{proof}
We treat the blow-ups in order.
\begin{enumerate}[wide]
\item To show that \(X_0\) is \(\P^r\)-averse for \(r \geq 2\), it suffices to check that \(S\) and \(T\) are both \(\P^2\)-averse, according to the first part of Lemma~\ref{nopns}.  For \(T\) this follows since \(T\) is not uniruled, while for \(S\) we note that a nonconstant morphism \(h : \P^2_{\Kbar} \to S_{\Kbar}\) must be generically finite, and so induce an injection \(h^\ast : \Pic(S_{\Kbar}) \to \Pic(\P^2_{\Kbar})\), which is impossible.

Suppose that \(\phi : X_0 \to X_0\) is an automorphism. Let \(p_3 : X_0 \to T\) be the third projection.  We first claim that \(\phi\) must permute the geometric fibers of \(p_3\).  If \(p_3 \circ \phi\) contracts any geometric fiber of \(p_3\), it must contract every geometric fiber by the rigidity lemma.  So if \(\phi\) does not permute the fibers of \(p_3\), then every fiber of \(p_3\) has image in \(T\) of dimension at least \(1\).  Since these fibers are isomorphic to \(S \times S\), the image of every geometric fiber is uniruled, which implies that \(T\) must be geometrically uniruled, contradicting the choice of \(T\).

Consequently every automorphism of \(X_0\) is of the form \(\phi \times \psi\), where \(\phi\) is an automorphism of \(S \times S\) and \(\psi\) is an automorphism of \(T\).  Since \(\Aut(T)\) is trivial, the group \(\Aut(X_0)\) can be identified with \(\Aut(S \times S) \times \id_T\).

\item The center of the blow-up \(\pi_1\) has codimension \(3\), so it follows from Lemma~\ref{nopns} that \(X_1\) is \(\P^r\)-averse for \(r \geq 3\).  According to Lemma~\ref{noextraautos}, since \(X_0\) is \(\P^2\)-averse, \(\Aut(X_1)\) is given by the stabilizer of \(s \times S \times \Gamma_1\) in \(\Aut(X_0)\), which is isomorphic to the stabilizer of \(s \times S\) in \(\Aut(S \times S)\).

We claim that an element \(\phi\) of \(\Aut(S \times S)\) fixes \(s \times S\) only if it is of the form \(\phi_1 \times \phi_2\), where \(\phi_1\) is in \(\Aut(S;s)\) and \(\phi_2\) is in \(\Aut(S)\).  Indeed, if \(\phi\) fixes one fiber of \(p_1 : S \times S \to S\), it must permute the fibers, and so induces an automorphism \(\phi_1 : S \to S\) on the base with \(p_1 \circ \phi = \phi_1 \circ p_1\).  Then \((\id_S \times \phi_1^{-1}) \circ \phi\) is an automorphism of \(S \times S\) defined over \(p_1\).  This must be given by a map \(\id_S \times \phi_2 : S \times S \to S \times S\), since \(\Aut(S)\) is discrete,  and so \(\phi\) is of the form \(\phi_1 \times \phi_2\), where \(\phi_1\) fixes \(s\).

\item Since \(X_1\) is \(\P^r\)-averse for \(r \geq 3\) and \(X_2\) is the blow-up of \(X_1\) at a center of codimension \(4\), it follows that \(X_2\) is \(\P^r\)-averse for \(r \geq 4\).  Since the center of \(\pi_2\) has codimension \(4\) and \(X_1\) is \(\P^3\)-averse, the automorphisms of \(X_2\) are given by isomorphisms of \(X_1\) that fix \(G \times p \times t_2\).  The automorphisms of \(X_1\) are all of the form \(\phi_1 \times \phi_2 \times \id_T\), and so this stabilizer is exactly \(\id_S \times \Aut(S;p) \times \id_T\). 

\item We have seen that \(X_2\) is \(\P^4\)-averse, and \(X_3\) is the blow-up of \(X_2\) at a center of codimension \(5\).  It follows that \(X_3\) is \(\P^r\)-averse for \(r \geq 5\), and the automorphisms of \(X_3\) are lifts of automorphisms of \(X_2\) that fix \(p \times p \times \Gamma_3\).  Every automorphism of \(X_2\) fixes \(p \times p \times \Gamma_3\), and so the automorphisms of \(X_3\) are again given by \(\id_S \times \Aut(S;p) \times \id_T\). 

\item The centers of the blow-ups \(\pi_1\) and \(\pi_2\) are both disjoint from the fiber \(S \times S \times t\), since \(t\) lies on neither \(\Gamma_1\) nor \(\Gamma_2\), while the center of the blow-up \(\pi_3\) meets \(S \times S \times t\) at the single point \(p \times p \times t\).  As a result, \(\Delta_S(C) \times t\) meets \(E_3\) at one point \(u\), as claimed.  The restriction of \(\pi_3 \circ \pi_2 \circ \pi_1\) to the strict transform of \(S \times S \times t\) is the blow-up at the point \(p \times p \times t\).

Since \(X_3\) is \(\P^5\)-averse and the center of \(\pi_3\) has codimension \(6\), \(\Aut(X_4)\) is isomorphic to the stabilizer of \(u\) in \(\Aut(X_3)\).  These are exactly the automorphisms \(\id_S \times \phi \times \id_T\) of \(X_3\) that fix the tangent direction \(T_{\Delta(p)}(\Delta_S(C)) \times t\).  According to Lemma~\ref{fixingp}, these are exactly the lifts of automorphisms of the form \(\id_S \times \parauto \times \id_T\). \qedhere
\end{enumerate}
\end{proof}

This completes the construction.

\begin{proof}[Proof of Theorem~\ref{autoexample}]
Let \(X = X_4\) be as in Lemma~\ref{blowupsequence}.  The variety \(X\) is smooth, projective and geometrically simply connected, since it is a blow-up of \(S \times S \times T\) where \(S\) is a rational surface and \(T\) is smooth and geometrically simply conneected.  The group \(\Aut(X)\) is isomorphic to \(\parauto\), which is not finitely generated according to Lemma~\ref{surfacenonfg}.
\end{proof}

\section{A variety with many forms}

We now show how the construction of the previous section can be adapted to give an example of a \(K\)-variety with infinitely many \(L/K\)-forms even when \(L/K\) is a finite extension.  In the case \(K = \R\) and \(L = \C\), we obtain an example of a variety with infinitely many non-isomorphic real structures.

A standard descent argument shows that the \(L/K\)-forms of \(X\) are classified by the Galois cohomology  \(H^1(\Gal(L/K),\Aut(X_L))\)~\cite{serregalois}.  In many settings, this set is finite.  Indeed, according to a theorem of Borel and Serre~\cite[Th\'eor\`eme 6.1]{borelserre}, if \(\pi_0(\Aut(X_{\bar{K}}))\) is an arithmetic group, then the set of forms of \(X\) over \(\bar{K}\) is finite; this includes nearly all varieties for which the group of automorphisms is known.  The set of forms is also finite when \(X\) is a minimal surface of non-negative Kodaira dimension~\cite{dik}, even though  for such varieties the group of automorphisms need not even be commensurable with an arithmetic group~\cite{totaro}.

Our example of a variety with infinitely many forms is obtained by an additional blow-up of the variety \(X\) constructed in the first section.  When \(L/K\) is a quadratic extension and every automorphism of \(X_L\) is defined over \(K\), the set \(H^1(\Gal(L/K),\Aut(X_L))\) is simply the set of conjugacy classes of involutions in \(\Aut(X_L)\). The next lemma makes explicit what is required.

\begin{lemma}
\label{neededproperties}
Suppose that \(L/K\) is a quadratic extension, and that \(X\) is a smooth, projective variety over \(K\).  Suppose that there is a finite-index subgroup \(G^\prime \subset \Aut(X_L)\) which contains infinitely many conjugacy classes of involutions and on which \(\Gal(L/K)\) acts trivially.  Then the variety \(X\) has infinitely many \(L/K\)-forms.
\end{lemma}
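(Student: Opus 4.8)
The plan is to prove directly that the pointed set $H^1(\Gal(L/K), \Aut(X_L))$ is infinite, since it is in bijection with the $L/K$-forms of $X$. Write $A = \Aut(X_L)$ and let $a \mapsto \overline{a}$ denote the action of the nontrivial element of the order-two group $\Gal(L/K)$. For such a group the nonabelian cohomology is explicit: a $1$-cocycle is determined by a single element $a \in A$ subject to $a\overline{a} = 1$, and two cocycles $a$ and $b$ are cohomologous exactly when $b = c^{-1} a \overline{c}$ for some $c \in A$. Thus $H^1(\Gal(L/K), A)$ is the set of orbits of $A$ acting on $\{a : a\overline{a}=1\}$ by the twisted conjugation $(a,c) \mapsto c^{-1} a \overline{c}$.

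First I would check that involutions of $G'$ produce cohomology classes. Because $\Gal(L/K)$ acts trivially on $G'$, an involution $a \in G'$ satisfies $\overline{a} = a$, whence $a \overline{a} = a^2 = 1$ and $a$ is a cocycle. Moreover, for $c \in G'$ the twisted conjugation $c^{-1} a \overline{c} = c^{-1} a c$ is ordinary conjugation, so two $G'$-conjugate involutions determine the same class. This yields a well-defined map
\[
\Phi : \{\, G'\text{-conjugacy classes of involutions in } G' \,\} \longrightarrow H^1(\Gal(L/K), A),
\]
whose source is infinite by hypothesis. It therefore suffices to show that $\Phi$ has finite fibers.

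The heart of the matter is a coset computation. Set $n = [A : G']$ and fix representatives $A = \bigsqcup_{i=1}^{n} c_i G'$. Suppose involutions $a, b \in G'$ have the same image under $\Phi$, so that $b = c^{-1} a \overline{c}$ for some $c \in A$; writing $c = c_i g$ with $g \in G'$ and using $\overline{g} = g$, I would compute
\[
b = g^{-1}\bigl( c_i^{-1} a \,\overline{c_i} \bigr) g .
\]
Hence $b$ is $G'$-conjugate to $c_i^{-1} a \,\overline{c_i}$ for some $i$, and the fiber of $\Phi$ over the class of $a$ is contained in the set of at most $n$ classes $\{\, [\,c_i^{-1} a \,\overline{c_i}\,] : 1 \le i \le n \,\}$. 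In particular every fiber has at most $n$ elements.

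Since $\Phi$ has an infinite source and fibers of size at most $n$, its image in $H^1(\Gal(L/K), A)$ is infinite, so $X$ has infinitely many $L/K$-forms. The only subtle point — and the step I expect to be the main obstacle — is controlling how twisted conjugation by elements of $A$ outside $G'$ can fuse distinct $G'$-conjugacy classes of involutions. The coset decomposition shows that each class can be moved to at most $n$ others, and it is precisely here that both the finiteness of $[A : G']$ and the triviality of the Galois action on $G'$ (which replaces $\overline{g}$ by $g$) enter.
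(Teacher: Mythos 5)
Your proof is correct, and it reaches the same structural conclusion as the paper — namely that the map from $G'$-conjugacy classes of involutions in $G'$ to $H^1(\Gal(L/K),\Aut(X_L))$ has finite fibers — but by a more elementary, self-contained route. The paper identifies $H^1(\Gal(L/K),G')$ with the set of conjugacy classes of involutions in $G'$ (using triviality of the Galois action, exactly as you do) and then invokes the exact sequence of pointed sets
\[
H^0(\Gal(L/K),\Aut(X_L)/G') \to H^1(\Gal(L/K),G') \to H^1(\Gal(L/K),\Aut(X_L))
\]
from Serre, concluding from finiteness of the leftmost set that $H^1(\Gal(L/K),\Aut(X_L))$ is infinite. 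You instead unwind nonabelian $H^1$ of the order-two Galois group explicitly and bound every fiber of $\Phi$ by the index $n=[A:G']$ via the coset decomposition $c=c_ig$. This is arguably a little more careful than the paper's phrasing: exactness of a sequence of pointed sets, read literally, only controls the fiber over the distinguished element, and the statement that \emph{all} fibers of $H^1(G')\to H^1(A)$ are finite requires the standard twisting argument — which is precisely what your computation $b=g^{-1}(c_i^{-1}a\,\overline{c_i})g$ carries out by hand. One small point worth making explicit in your write-up: for a given $i$ the element $c_i^{-1}a\,\overline{c_i}$ need not lie in $G'$ at all, so the fiber over $[a]$ consists of the classes of those $c_i^{-1}a\,\overline{c_i}$ that happen to land in $G'$ (as they must whenever some $b\in G'$ in the fiber arises from the coset $c_iG'$); this does not affect the bound of at most $n$ classes per fiber.
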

\begin{proof}
The forms of \(X\) are classified by the set \(H^1(\Gal(L/K),\Aut(X_L))\). Because the action of \(\Gal(L/K)\) on \(G^\prime\) is trivial, \(H^1(\Gal(L/K),G^\prime)\) is the set of conjugacy classes of involutions in \(G^\prime\), which is infinite by assumption.  There is an exact sequence
\[
H^0(\Gal(L/K),\Aut(X_L)/G^\prime) \to H^1(\Gal(L/K),G^\prime) \to H^1(\Gal(L/K),\Aut(X_L))
\]
Here \(\Aut(X_L)/G^\prime)\) should be interpreted as the set of left-conjugacy classes of \(G^\prime\) rather than a group, but the sequence is nevertheless exact~\cite[Proposition 38]{serregalois}.  Since \(G^\prime\) has finite index in \(\Aut(X_L)\), the leftmost set is finite, whence \(H^1(\Gal(L/K),\Aut(X_L))\) is infinite, as claimed.
\end{proof}

We retain the notation of the first section, labelling the six lines as \(L_0,\ldots,L_5\), with \(L_0\) the curve \(C\) of Theorem~\ref{surfaceautos}.  Let \(p_{ij} = L_i \cap L_j\), and write \(p_i\) for the point \(p_{0i}\).  Recall that the other lines \(L_i\) are chosen so that the intersections of \(L_1\), \(L_2\), \(L_3\), \(L_4\) and \(L_5\) with \(L_0\) are given by the points \(p_1 = 0\), \(p_2 = 1\), \(p_3 = 2\), \(p_4 = 3\), and \(p_5 = 6\), in suitable coordinates on \(L_0\).  We will consider the following subgroups of \(\Aut(S)\):

\begin{enumerate}
\item \(\parauto\), the subgroup of automorphisms restricting to \(L_0\) as \(z \mapsto z+c\);
\item \(\bigauto\), the subgroup of automorphisms restricting to \(L_0\) as either \(z \mapsto z+c\) or \(z \mapsto -z+c\);
\item \(\bigautoev\), the subgroup of automorphisms which fix the two lines \(L_0\) and \(L_5\) as well as the curves \(L_1 \cup L_4\) and \(L_2 \cup L_3\).
\end{enumerate}

Recall that every automorphism of \(S\) must permute the six lines \(L_i\) since their union is the unique member of \(\abs{-2K_S}\); an automorphism lies in \(\bigautoev\) if it fixes \(L_0\) and \(L_5\) and either fixes or exchanges the members of the two other pairs.  In particular, \(\bigautoev\) has finite index in \(\bigauto\).

Let \(s_0 : S \to S\) be the involution of \(S\) determined by the marking
given by the triples \(L_0,L_1,L_4\) and \(L_5,L_2,L_3\), with \(L_0\) and \(L_5\) as the distinguished elements.  The automorphism \(s_0\) fixes the two distinguished lines \(L_0\) and \(L_5\), and exchanges \(L_1\) with \(L_4\) and \(L_2\) with \(L_3\).
This map restricts to \(L_0\) in such a way that it exchanges \(p_1 = 0\) with \(p_4 = 3\) and \(p_2 = 1\) with \(p_3 = 2\); thus the restriction is the involution \(z \mapsto 3-z\), and \(s_0\) is contained in the subgroup \(\bigauto\).
Figure~\ref{s0picture} shows the important curves in \(\P^2\) acted on by the map \(s_0\).  The two dashed lines are exchanged, as are the two heavily dotted lines.  The pencil of lines passing through the point \(p_5\) is preserved by \(s_0\).  The strict transforms of the two lightly dotted lines through \(p_5\) are \((-1)\)-curves on \(S\) with classes \(H - E_{05} - E_{14}\) and \(H- E_{05} - E_{23}\), which will appear later.

\begin{figure}[hbt]
\centering
\begin{tikzpicture}[scale=0.75, extended line/.style={shorten >=-#1,shorten <=-#1}, extended line/.default=1cm]

\coordinate (P1) at (0,0);
\filldraw (P1) circle (2pt) node[shift={(-0.2,0.22)}] {$p_1$};
\coordinate (P2) at (1,0);
\filldraw (P2) circle (2pt) node[shift={(0.22,0.22)}] {$p_2$};
\coordinate (P3) at (2,0);
\filldraw (P3) circle (2pt) node[shift={(0.25,0.22)}] {$p_3$};
\coordinate (P4) at (3,0);
\filldraw (P4) circle (2pt) node[shift={(0.28,0.22)}] {$p_4$};
\coordinate (P5) at (6,0);
\filldraw (P5) circle (2pt) node[shift={(0.22,0.22)}] {$p_5$};

\draw [extended line] (P1)--(P5);

\coordinate (X1) at (2.5,4);
\filldraw (X1) circle (2pt) node[shift={(0.45,0.05)}] {$p_{14}$};
\draw [extended line, densely dotted] (P1) -- (X1);
\draw [extended line, densely dotted] (P4) -- (X1);
\coordinate (X2) at (-0.5,3);
\filldraw (X2) circle (2pt) node[shift={(0.35,0.11)}] {$p_{23}$};
\draw [extended line, dashed] (P2) -- (X2);
\draw [extended line, dashed] (P3) -- (X2);

\draw [extended line, loosely dotted] (P5) -- (X1);
\draw [extended line, loosely dotted] (P5) -- (X2);

\draw [extended line] (P5) -- (4.5,4);

\node at (-1.05,-0.75) {$L_1$};
\node at (1.00,-0.75) {$L_2$};
\node at (2.25,-0.75) {$L_3$};
\node at (3.50,-0.75) {$L_4$};
\node at (4.5,-0.35) {$L_0$};
\node at (5.45,2.84) {$L_5$};
\end{tikzpicture}
\caption{The involution $s_0$}
\label{s0picture}
\end{figure}

Let \(\tilde{s}_0 : X_3 \to X_3\) be the automorphism of \(X_3\) induced by \(\id_S \times s_0 \times \id_T\), under the identification of Lemma~\ref{blowupsequence}.  The variety \(X\) of the first section was obtained by blowing up a point \(u\) on \(E_3\).  It will now be convenient to blow up a second such point \(\tilde{s}_0(u)\) as well.

\begin{lemma}.
Let \(X^\prime\) be the blowup of \(X_3\) at \(u\) and \(\tilde{s}_0(u)\).  Then
 \(\Aut(X^\prime) \cong \bigauto\).
\end{lemma}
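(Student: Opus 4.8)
The plan is to run the blow-up machinery of Lemma~\ref{noextraautos} one more time and then identify the resulting stabilizer with \(\bigauto\) by a short coset argument. Write \(V = \{u, \tilde{s}_0(u)\}\), so that \(X^\prime = \Bl_V X_3\). This \(V\) is a reduced, zero-dimensional—hence smooth and equidimensional—subvariety of \(X_3\) of codimension \(6\), defined over \(K\) because \(u\) is a \(K\)-point and \(\tilde{s}_0\) is defined over \(K\). Since \(X_3\) is \(\P^5\)-averse by Lemma~\ref{blowupsequence}(4), Lemma~\ref{noextraautos} applies and identifies \(\Aut(X^\prime)\) with the subgroup \(\Stab(V) \subset \Aut(X_3)\) of automorphisms preserving the unordered pair \(\{u, \tilde{s}_0(u)\}\). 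As \(\Aut(X_3) = \id_S \times \Aut(S;p) \times \id_T\), the task reduces to determining which \(\phi \in \Aut(S;p)\) give an automorphism \(\id_S \times \phi \times \id_T\) preserving \(V\).

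Next I would confirm that \(u\) and \(\tilde{s}_0(u)\) are genuinely distinct, so that \(V\) is a reduced pair and \(\tilde{s}_0\) acts on it by a nontrivial transposition. Recall that \(u\) is the point where the strict transform of \(\Delta_S(C) \times t\) meets \(E_3\); under the identification of the fiber of \(E_3\) over \(p \times p \times t\) with the projectivized normal space \(\P(T_pS \oplus T_pS \oplus (T_tT / T_t\Gamma_3))\), the point \(u\) is the ``diagonal'' direction \([(v,v,0)]\), where \(v\) spans \(T_pC\). Since \(s_0\) fixes \(p\) and restricts to \(C\) as \(z \mapsto 3 - z\), whose derivative at \(p = \infty\) is \(-1\), the map \(\tilde{s}_0\) carries \(u\) to the ``anti-diagonal'' direction \([(v,-v,0)]\); as \(-1 \neq 1\) these differ, so \(u \neq \tilde{s}_0(u)\).

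With this in hand the stabilizer is easy to pin down. The pointwise stabilizer of \(V\) in \(\Aut(X_3)\) is the set of \(\id_S \times \phi \times \id_T\) fixing \(u = T_{\Delta_S(p)}(\Delta_S(C)) \times t\), which by Lemma~\ref{fixingp} (exactly as used in Lemma~\ref{blowupsequence}(5)) is precisely \(\id_S \times \parauto \times \id_T\). The natural map \(\Stab(V) \to \mathrm{Sym}(\{u,\tilde{s}_0(u)\}) \cong \Z/2\) has this pointwise stabilizer as its kernel, and \(\tilde{s}_0\) is an element of \(\Stab(V)\) mapping to the nontrivial transposition, since \(\tilde{s}_0^2 = \id\) exchanges the two distinct points. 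Hence, under the identification \(\Aut(X_3) = \id_S \times \Aut(S;p) \times \id_T\), the stabilizer \(\Stab(V)\) corresponds to \(\parauto \sqcup s_0 \cdot \parauto\), where \(s_0 \cdot \parauto\) consists of the automorphisms restricting to \(C\) as \((z \mapsto 3 - z)\) composed with a translation, that is, as \(z \mapsto -z + c\). This is precisely the group \(\bigauto\) of automorphisms restricting to \(L_0\) as \(z \mapsto z + c\) or \(z \mapsto -z + c\), so \(\Aut(X^\prime) \cong \bigauto\).

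I expect the only real subtlety to lie in the tangent-space bookkeeping of the second paragraph: correctly describing the fiber of \(E_3\) as a projectivized normal space, locating \(u\) as the diagonal tangent direction, and checking that \(s_0\) sends it to the anti-diagonal direction. Everything else is formal: the reduction is a direct application of Lemma~\ref{noextraautos}, and the final identification is the elementary observation that, once the pointwise stabilizer is \(\parauto\) and \(\tilde{s}_0\) realizes the transposition, the full stabilizer is generated by \(\parauto\) together with \(s_0\), which is exactly \(\bigauto\).
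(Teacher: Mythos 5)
Your proposal is correct and follows the same route as the paper, which simply invokes Lemma~\ref{noextraautos} to identify \(\Aut(X^\prime)\) with the stabilizer of \(u \cup \tilde{s}_0(u)\) and then reads off \(\bigauto\); your write-up supplies the details the paper leaves implicit (the normal-space description of \(E_3\), the derivative \(-1\) of \(z \mapsto 3-z\) at \(\infty\) showing \(u \neq \tilde{s}_0(u)\), and the coset decomposition \(\parauto \sqcup s_0\cdot\parauto\)). The only point worth making explicit is that elements of \(\parauto\) also fix \(\tilde{s}_0(u)\) (their restriction to \(C\) has derivative \(1\) at \(\infty\)), so that \(\parauto\) really is contained in \(\Stab(V)\) and coincides with the kernel of \(\Stab(V)\to\mathrm{Sym}(V)\).
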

\begin{proof}
According to Lemma~\ref{noextraautos}, the automorphisms of \(X^\prime\) are the stabilizer of \(u \cup \tilde{s}_0(u)\).  These are precisely the automorphisms \(S\) which are of either the form \(z \mapsto z+c\) or \(z \mapsto -z+c\), as required.
\end{proof}

Let \(\bar{\tau} : S \to S\) be an automorphism restricting to \(L_0\) as \(\tau = (z \mapsto 3z)\).  The elements \(s_n = \bar{\tau}^{-n} \circ s_0 \circ \bar{\tau}^{n}\) are all involutions, and the restriction of \(s_n\) to \(L_0\) is given by the map \(z \mapsto 3^{1-n}- z\), which lies in \(\bigauto\).  Although the maps \(s_n\) are conjugate in \(\Aut(S)\), they are conjugate by powers of \(\bar{\tau}\), and \(\bar{\tau}\) is not contained in \(\bigauto\).  We now work to show that the \(s_n\) indeed define distinct conjugacy classes in the subgroup \(\bigauto\).

Since it is difficult to study relations in \(\Aut(S)\) directly (cf.\ Remarks~\ref{notinjective} and~\ref{badsurface}), it will be convenient to consider the action of this group on \(N^1(S)\).   A basis for \(N^1(S)\) given by \(H = \pi^\ast \mathcal O_{\P^2}(1)\) followed by the fifteen classes \(E_{ij}\), which we order lexicographically.

\begin{lemma}
\label{uniquenef}
The class \(H-E_{05}\) is the unique class \(D\) in \(N^1(S)\) for which:
\begin{enumerate}
\item \(D\) is contained in the \((+1)\)-eigenspace of the involution \(s_0^\ast : N^1(S) \to N^1(S)\).
\item \(D \cdot L_0 = D \cdot L_5 = 0\) and \(D \cdot L_1 = D \cdot L_2 = D \cdot L_3 = D \cdot L_4 = 1\).
\item \(D\) is nef.
\item \(D^2 = 0\).
\end{enumerate}
\end{lemma}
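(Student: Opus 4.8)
The plan has two halves: verify that $H - E_{05}$ satisfies (1)--(4), then prove uniqueness. The existence half is a direct computation in the basis $H, E_{ij}$, using $H^2 = 1$, $H \cdot E_{ij} = 0$, $E_{ij} \cdot E_{kl} = -\delta_{(ij),(kl)}$ together with the line classes $[L_i] = H - \sum_{j \neq i} E_{ij}$. One finds $(H-E_{05})^2 = 0$ and $(H - E_{05}) \cdot L_i = 0,0,1,1,1,1$ for $i = 0,\dots,5$, giving (2) and (4). For (3), $|H - E_{05}|$ is the base-point-free pencil of strict transforms of lines through $p_5 = p_{05}$, defining the projection $\phi : S \to \mathbb P^1$ from $p_5$, so $H - E_{05}$ is the pullback of an ample class and is nef. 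For (1), since $s_0$ fixes $L_0$ and $L_5$ it fixes $p_{05}$ and preserves the pencil of lines through it, so $s_0^\ast(H - E_{05}) = H - E_{05}$.

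For uniqueness, suppose $D$ satisfies (1)--(4); first I would extract its geometry. Summing the intersection numbers in (2) gives $D \cdot (-2K_S) = D \cdot \sum_i L_i = 4$, so $-K_S \cdot D = 2$; with $D^2 = 0$, adjunction yields $p_a(D) = 0$, and Riemann--Roch gives $h^0(D) \ge 2$ (here $h^2(D) = h^0(K_S - D) = 0$ because $D$ is nef and $(K_S - D)\cdot D = -2 < 0$). A nef class of square zero moving in a pencil defines a fibration $\psi : S \to \mathbb P^1$ with $D = mF$ for the fibre class $F$; the fibres are rational since $p_a = 0$, and $D \cdot L_1 = 1$ forces $m = 1$. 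Thus $D$ is the primitive fibre class of a $\mathbb P^1$-fibration in which $L_0, L_5$ are vertical and $L_1, \dots, L_4$ are sections---exactly the numerical profile of $f := H - E_{05}$.

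The remaining point is that these data determine $D$ uniquely, and this is where (1) is essential: without it there are genuinely different solutions, for instance the conic-pencil class $2H - E_{05} - E_{0a} - E_{5b} - E_{cd}$ (for suitable indices) satisfies (2)--(4) as well. To exploit (1), set $w := D - f$. A short computation shows the six $L_i$ are pairwise orthogonal $(-4)$-curves, spanning a negative-definite rank-$6$ sublattice; since $D$ and $f$ have equal intersection with every $L_i$ we get $w \perp \langle L_0, \dots, L_5\rangle$, and since both lie in the $(+1)$-eigenspace $V_+$ of $s_0^\ast$, also $w \in V_+$. I would then compute $s_0^\ast$ on $N^1(S)$ from the Bertini description of $s_0$: on the six node-classes it acts by $E_{01} \leftrightarrow E_{04}$, $E_{25} \leftrightarrow E_{35}$ (fixing $E_{14}, E_{23}$), while on $H$ and the nine base-point classes it is the inversion involution of the rational elliptic surface $S_R$, fixing the fibre and section classes and acting as $-1$ on the orthogonal $E_8$. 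A trace count then gives $\dim V_+ = 6$, and intersecting with $\langle L_i \rangle^\perp$ leaves a rank-$2$ space $W \ni w$ of signature $(1,1)$. On $W$ the equation $D^2 = 0$ becomes a conic through $w = 0$, and nef-ness of $D$, tested against the components of the reducible fibres of $\phi$, excludes every other point, forcing $w = 0$ and $D = f$. (Equivalently, once $w$ is located one checks $D \cdot f = 0$, whereupon the Hodge index theorem gives $D \parallel f$ and $D \cdot L_1 = 1$ concludes.)

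The main obstacle is the explicit determination of $s_0^\ast$ on $N^1(S)$: because $s_0$ acts on the nine base-point exceptional classes as a Bertini/inversion involution rather than a mere permutation, computing $V_+$ (and hence $W$) is the genuinely laborious step---the same computation flagged in Remark~\ref{notinjective}---and the closing nef-ness argument must be arranged to rule out precisely the extraneous conic- and higher-degree fibration classes that conditions (2)--(4) alone permit.
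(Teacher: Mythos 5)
Your existence half is fine and agrees with the paper: $|H-E_{05}|$ is the base-point-free pencil of lines through $p_{05}$, which $s_0$ preserves. The uniqueness half, however, rests on an eigenspace computation that is wrong, and the error is not cosmetic. You assert that $s_0^\ast$ acts on $N^1(S_R)$ by fixing $F$ and the section class and by $-1$ on the orthogonal $E_8$, and deduce $\dim V_+ = 6$. But the fibrewise inversion acts as $-1$ only on the Mordell--Weil lattice, not on all of $\langle F, O\rangle^\perp$: here the elliptic fibration has two $I_3$ fibres ($L_0\cup L_1\cup L_4$ and $L_5\cup L_2\cup L_3$), so the Mordell--Weil lattice has rank $4$, and $s_0^\ast$ fixes the fibre-component classes $L_0$, $L_5$, $L_1+L_4$, $L_2+L_3$, all of which pair to zero with $F$ and $O$. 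The correct count is $\dim V_+ = 4+4 = 8$ (the paper states exactly this: condition (1) cuts out an $8$-dimensional subspace). Consequently your space $W = V_+ \cap \langle L_0,\dots,L_5\rangle^\perp$ has rank $4$ and signature $(1,3)$, not rank $2$ of signature $(1,1)$, and the final step --- ruling out every point other than $w=0$ on the quadric $D^2=0$ by testing nefness against fibre components --- is precisely the nontrivial $4$-dimensional computation you have not performed. The parenthetical shortcut ``one checks $D\cdot f = 0$, whereupon Hodge index concludes'' is unjustified: Hodge index only gives $D\cdot f \ge 0$ for two nef classes of square zero, and showing the inequality is an equality is equivalent to the whole problem.

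For comparison, the paper makes no attempt to shortcut this: it observes that conditions (1)--(3) confine $D$ to the intersection of $17$ half-spaces (nonnegativity against the fifteen $E_{ij}$ and the two $(-1)$-classes $H-E_{05}-E_{14}$, $H-E_{05}-E_{23}$) inside the $4$-dimensional affine slice, computes (by machine) that this is an unbounded polyhedron with unique vertex $H-E_{05}$ and six explicit extremal rays $R_i$, and then kills everything but the vertex using $(H-E_{05})\cdot R_i = 2$ and $R_i\cdot R_j \ge 0$ together with condition (4). Your more structural route (Riemann--Roch, $-K_S\cdot D = 2$, the induced $\P^1$-fibration) is attractive and could plausibly be completed, but as written it also glosses over why $D = mF$ with no fixed or extra vertical part, and it ultimately defers the decisive case analysis to a computation whose setup you have gotten wrong. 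As it stands the uniqueness argument has a genuine gap.
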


\begin{proof}
The linear system \(H- E_{05}\) is given by the strict transforms on \(S\) of the pencil of lines through \(p_{05}\).  Since this pencil is preserved by \(s_0\) and the linear system on \(S\) is basepoint-free, the claimed properties follow.  We next check that there are no other classes with this property.

Suppose that \(D\) is a class satisfying all these conditions.  Condition (1) limits \(D\) to an \(8\)-dimensional subspace of the \(16\)-dimensional vector space \(N^1(S)\), and condition (2) further restricts the class of \(D\) to a \(4\)-dimensional affine subspace of \(N^1(S)\).  By condition (3), \(D\) must have positive intersection with the fifteen classes \(E_{ij}\), and the two \((-1)\)-curve classes \(H - E_{05} - E_{14}\) and \(H- E_{05} - E_{23}\).
The  class \(D\) is thus constrained to lie in the intersection of \(17\) closed half-spaces inside a \(4\)-dimensional affine space.  A routine calculation shows that the intersection of these half-spaces is a four-dimensional, unbounded polyhedron in \(N^1(S)\) with a single vertex \(H-E_{05}\) and six infinite rays \(R_i\).  The six rays are given by the rows of the matrix
\[
\left(
\begin{array}{rrrrrrrrrrrrrrrrr}
5 & -1 & 0 & 0 & -1 & -3 & -1 & -1 & -2 & 0 & -2 & -1 & -1 & -1 & -1 & 0 \\
5 & 0 & -1 & -1 & 0 & -3 & -1 & -1 & -2 & -1 & -2 & -1 & 0 & -1 & 0 & -1 \\
4 & -1 & 0 & 0 & -1 & -2 & -1 & -1 & 0 & -1 & -2 & -1 & 0 & -1 & 0 & -1 \\
4 & 0 & -1 & -1 & 0 & -2 & -1 & -1 & -2 & 0 & 0 & -1 & -1 & -1 & -1 & 0 \\
3 & -1 & 0 & 0 & -1 & -1 & -1 & -1 & 0 & 0 & 0 & -1 & -1 & -1 & -1 & 0 \\
3 & 0 & -1 & -1 & 0 & -1 & -1 & -1 & 0 & -1 & 0 & -1 & 0 & -1 & 0 & -1 
\end{array}
\right)
\]
Any class satisfying conditions (1)--(3) is the sum of \(H-E_{05}\) and some non-negative linear combination of the six rays \(R_i\).  However, we find that \((H-E_{05})^2 = 0\), \((H-E_{05}) \cdot R_i = 2\), and \(R_i \cdot R_j \geq 0\) for any \(i\) and \(j\), not necessarily distinct.  It follows that the only class in this polyhedron with self-intersection \(0\) is the vertex itself.
\end{proof}

An accompanying Sage file provides a computation of the matrix for the action of \(s_0^\ast\) on \(N^1(S)\) with respect to our basis, as well as the calculation yielding the polyhedron and the intersections of the generating rays \(R_i\).

\begin{lemma}
The centralizer of \(s_0\) in \(\bigautoev\) is given by \(\langle \id_S, s_0 \rangle\).
\end{lemma}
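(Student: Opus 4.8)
The plan is to use Lemma~\ref{uniquenef} to rigidify any centralizing automorphism, compensating for the fact that the restriction map \(\rho : \Aut(S;C) \to \Aut(C)\) is far from injective (cf.\ Remark~\ref{notinjective}), so that commutativity of restrictions on \(C\) carries no information by itself. The inclusion \(\ang{\id_S, s_0}\) into the centralizer is immediate, since \(s_0\) lies in \(\bigautoev\) and commutes with itself, so the content is the reverse inclusion. Let \(\phi \in \bigautoev\) commute with \(s_0\). Since \(\phi \circ s_0 = s_0 \circ \phi\), the pullbacks satisfy \(\phi^\ast \circ s_0^\ast = s_0^\ast \circ \phi^\ast\) on \(N^1(S)\), so \(\phi^\ast\) preserves the \((+1)\)-eigenspace of \(s_0^\ast\). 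I would first verify that \(\phi^\ast(H - E_{05})\) satisfies all four conditions of Lemma~\ref{uniquenef}: condition (1) holds because \(s_0^\ast(\phi^\ast(H-E_{05})) = \phi^\ast(s_0^\ast(H-E_{05})) = \phi^\ast(H-E_{05})\); condition (2) holds because \(\phi \in \bigautoev\) fixes the classes \(L_0\) and \(L_5\) and preserves the pairs \(\{L_1, L_4\}\) and \(\{L_2, L_3\}\), leaving the relevant intersection numbers unchanged; and conditions (3) and (4) are automatic, since an automorphism preserves the nef cone and the intersection form. By the uniqueness in Lemma~\ref{uniquenef}, \(\phi^\ast(H - E_{05}) = H - E_{05}\), so \(\phi\) preserves the conic bundle \(f : S \to \P^1\) defined by the pencil \(\abs{H - E_{05}}\) of strict transforms of lines through \(p_{05}\).

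Next I would pin down \(\phi\vert_C\) on \(C = L_0\). Since \(\phi \in \bigauto\), its restriction has the form \(z \mapsto \pm z + c\), and since \(\phi\) commutes with \(s_0\), the restriction \(\phi\vert_C\) commutes with \(s_0\vert_C : z \mapsto 3 - z\). A short computation in \(\PGL_2(K)\) shows the only such maps are \(z \mapsto z\) and \(z \mapsto 3 - z\), so \(\phi\vert_C \in \{\id_C, s_0\vert_C\}\). Replacing \(\phi\) by \(s_0 \circ \phi\) if necessary — which again lies in \(\bigautoev\) and centralizes \(s_0\) — I may assume \(\phi\vert_C = \id_C\).

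With this normalization \(\phi\) fixes each point \(p_{0i} = L_0 \cap L_i\), so it cannot exchange \(L_1\) with \(L_4\) or \(L_2\) with \(L_3\); being in \(\bigautoev\), it therefore fixes each of the six lines \(L_0, \ldots, L_5\) individually. Consequently \(\phi\) fixes every intersection point \(p_{ij}\), and since each \(L_i\) carries the five fixed points \(p_{ij}\) with \(j \neq i\), the restriction \(\phi\vert_{L_i}\) is the identity for every \(i\). Now the strict transforms of \(L_1, L_2, L_3, L_4\) are pairwise disjoint sections of \(f\) (one checks \(L_i \cdot (H - E_{05}) = 1\) and \(L_i \cdot L_j = 0\) for distinct \(i,j \in \{1,2,3,4\}\)), all fixed pointwise by \(\phi\). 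Because \(\phi\) fixes the three fibers over \([L_0]\), \([L_5]\), and the line through \(p_{05}\) and \(p_{14}\), it acts as the identity on the base \(\P^1\) and hence preserves every fiber. A general fiber \(F \cong \P^1\) then meets the four sections in four distinct \(\phi\)-fixed points, forcing \(\phi\vert_F = \id_F\); letting \(F\) vary gives \(\phi = \id_S\). Thus the normalized map is trivial, and the original \(\phi\) lies in \(\{\id_S, s_0\}\), so the centralizer is \(\ang{\id_S, s_0}\).

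The main obstacle is the first step: because \(\rho\) has a large kernel, the restriction \(\phi\vert_C\) detects almost nothing about \(\phi\) as an automorphism of \(S\), and the genuine content lies in the passage to \(N^1(S)\). The delicate point is checking that \(\phi^\ast(H - E_{05})\) meets all four hypotheses of Lemma~\ref{uniquenef} at once — precisely where both the commutativity with \(s_0\) (for the eigenspace condition) and the membership in \(\bigautoev\) (for the intersection conditions) are used, and where the uniqueness statement does the heavy lifting. Once the conic bundle and its four disjoint invariant sections are in hand, the remaining argument is a routine rigidity computation.
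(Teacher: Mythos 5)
Your first step coincides with the paper's: use commutativity to show \(\phi^\ast\) preserves the \((+1)\)-eigenspace of \(s_0^\ast\), verify conditions (1)--(4) of Lemma~\ref{uniquenef} for \(\phi^\ast(H-E_{05})\), and conclude that \(\phi\) preserves the conic bundle \(\abs{H-E_{05}}\). Your normalization via the restriction to \(C\) (computing that the only maps \(z \mapsto \pm z + c\) commuting with \(z \mapsto 3-z\) are \(\id\) and \(z \mapsto 3-z\)) is a legitimate and rather clean variant of the paper's normalization, and it does let you conclude that \(\phi\) fixes each of the six lines individually, bypassing the paper's separate \(E_{12}\)-argument for ruling out the exchange of \(L_2\) and \(L_3\).

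The gap is the sentence ``Consequently \(\phi\) fixes every intersection point \(p_{ij}\),'' and everything downstream of it. On \(S\) the points \(p_{ij}\) have been blown up, the strict transforms of the six lines are pairwise \emph{disjoint}, and fixing all six lines as curves does not determine where \(\phi\) sends the exceptional divisors \(E_{ij}\). Concretely, the \((-1)\)-curves \(E_{14}\) and \(H - E_{05} - E_{23}\) have identical intersection numbers with all six lines (namely \(1\) with \(L_1\) and \(L_4\), \(0\) with \(L_0, L_2, L_3, L_5\)), both lie in fibers of \(\lambda\), and both are \(s_0\)-invariant; an automorphism fixing all six lines and commuting with \(s_0\) could a priori exchange them, which would swap the two corresponding fibers. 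This is precisely why the paper invokes the generality hypothesis of Remark~\ref{extragenerality} at this point: one must assume there is no automorphism of the base \(\P T_{p_{05}}(\P^2) \cong \P^1\) fixing \([L_0]\) and \([L_5]\) while exchanging \([L(p_{05},p_{14})]\) and \([L(p_{05},p_{23})]\). Without ruling out this swap you have only two fixed fibers of \(\lambda\) (those containing \(L_0\) and \(L_5\)), which is not enough to force the induced action on the base to be trivial, and your claims that \(\phi\vert_{L_i} = \id\) and that the fiber through ``\(p_{14}\)'' is fixed are unsupported. Your normalization \(\phi\vert_{L_0} = \id_{L_0}\) does not repair this, since \(L_0\) is a single fiber of \(\lambda\), not a section, and so pins down only one point of the base. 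Once Remark~\ref{extragenerality} is invoked to secure a third (indeed fourth) fixed fiber, your concluding rigidity argument with the four disjoint sections \(L_1,\ldots,L_4\) goes through and matches the paper's.
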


\begin{proof}

Suppose that \(\phi : S \to S\) is an automorphism of \(S\) commuting with \(s_0\).  Then \(\phi^\ast : N^1(S) \to N^1(S)\) must preserve the \((+1)\)-eigenspace of \(s_0^\ast\), and so \(\phi^\ast(H-E_{05})\) lies in this eigenspace as well.  If \(\phi\) lies in \(\bigautoev\), then the intersection property (2) must be satisfied by \(\phi^\ast(H-E_{05})\).  Since \(\phi^\ast\) also preserves the nef cone and the intersection form, it in fact satisfies the hypotheses (1)--(4) of Lemma~\ref{uniquenef}.  It then follows from the lemma that \(\phi^\ast(H-E_{05}) = H-E_{05}\) in \(N^1(S)\), and since \(\Pic^0(S)\) is trivial, that \(\phi\) must preserve the class \(H-E_{05}\) in \(\Pic(S)\).
As a result, \(\phi\) permutes the fibers of the map \(\lambda : S \to \P^1\) given by the basepoint-free linear system \(\abs{H-E_{05}}\).

In particular, \(\phi\) permutes the singular fibers of \(\lambda\).  It must map \(E_{34}\) to another \(s_0\)-invariant \((-1)\)-curve contained in a fiber of \(\phi\) that has intersection \(1\) with both \(L_1\) and \(L_4\), and \(0\) with \(L_2\) and \(L_3\).  The only two such curves are \(E_{14}\) itself and the strict transform of the line from \(p_{05}\) to \(p_{23}\), which has class \(H-E_{05} - E_{23}\).  However, under the generality hypothesis of Remark~\ref{extragenerality}, there is no map that fixes \(L_0\) and \(L_5\) while exchanging the fibers containing these curves; consequently each of these fibers must be mapped to itself.  This implies that \(\phi\) fixes four fibers of the map \(\lambda\), and since the base is \(\P^1\), that \(\phi\) maps every fiber of \(\lambda\) to itself.

Since by assumption \(\phi\) lies in the subgroup \(\bigautoev\), it fixes the two curves \(L_0\) and \(L_5\) and either fixes or exchanges the members of the pairs \(L_1\), \(L_4\) and \(L_2\), \(L_3\).  Replacing \(\phi\) with \(\phi \circ s_0\) if necessary, we obtain an element commuting with \(s_0\) which fixes the four curves \(L_0\), \(L_5\), \(L_1\), and \(L_4\), and either fixes the two curves \(L_2\) and \(L_3\) or exchanges them.

Now, there is a singular fiber of \(\lambda\) containing the two \((-1)\)-curves with classes \(E_{12}\) and \(H - E_{05} - E_{12}\).  Suppose that \(\phi\) exchanges the two sections \(L_2\) and \(L_3\).  
Since
\begin{align*}
L_1 \cdot E_{12} &= 1,& L_2 \cdot E_{12} &= 1,& L_3 \cdot E_{12} &= 0,& L_4 \cdot E_{12} &= 0,
\end{align*}
it must be that
\begin{align*}
\phi(L_1) \cdot \phi(E_{12}) &= 1,& \phi(L_2) \cdot \phi(E_{12}) &= 1,& \phi(L_3) \cdot \phi(E_{12}) &= 0,& \phi(L_4) \cdot \phi(E_{12}) &= 0, 
\end{align*}
and so
\begin{align*}
L_1 \cdot \phi(E_{12}) &= 1,& L_3 \cdot \phi(E_{12}) &= 1,& L_2 \cdot \phi(E_{12}) &= 0,& L_4 \cdot \phi(E_{12}) &= 0.
\end{align*}
The fibers of \(\lambda\) are preserved, and so \(\phi(E_{12})\) must be either \(E_{12}\) or the curve of class \(H - E_{05} - E_{12}\).  However, neither of these curves has the requisite intersection properties for \(\phi(E_{12})\).  We conclude that \(\phi\) must send \(L_2\) to itself and \(L_3\) to itself.

Thus \(\phi\) must commute with the projection \(\lambda\) and fix the four sections \(L_1\), \(L_2\), \(L_3\), and \(L_4\).  A general geometric fiber \(F\) of \(\lambda\) is a rational curve in the linear system \(\abs{H - E_{05}}\).  The map \(\phi\) fixes the four points of intersection of \(F\) with the sections listed, and so \(\phi\vert_F\) must be the identity map.  Since \(\phi\) fixes a Zariski dense set of points on \(S_L\), it must be the identity.  As we may have previously replaced \(\phi\) with \(\phi \circ s_0\), we conclude that the centralizer is given by \(\langle \id_S,s_0 \rangle\).
\end{proof}

\begin{corollary}
\label{manyinvolutions}
The group \(\bigautoev\) contains infinitely many conjugacy classes of involutions.
\end{corollary}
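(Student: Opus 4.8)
The plan is to show that the involutions $s_n=\bar{\tau}^{-n}\circ s_0\circ\bar{\tau}^n$, which lie in $\bigautoev$ and restrict to $L_0$ as $z\mapsto 3^{1-n}-z$, represent pairwise distinct conjugacy classes in $\bigautoev$; since these are countably many involutions, this yields the corollary at once. The essential mechanism is a discrepancy of multipliers on $L_0$: every element of $\bigauto$ restricts to $L_0$ as a map $z\mapsto\epsilon z+d$ with $\epsilon\in\{\pm1\}$, and hence with multiplier $\pm1$, whereas the elements $\bar{\tau}^k$ conjugating the $s_n$ to one another restrict to the scalings $z\mapsto 3^k z$. I will convert a hypothetical conjugacy in $\bigautoev$ into an element of the centralizer of $s_0$ whose restriction to $L_0$ has multiplier $\epsilon\,3^{m-n}$, and then show that this multiplier is forced to be $\pm1$.

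Concretely, suppose toward a contradiction that $\phi\in\bigautoev$ satisfies $\phi\circ s_n\circ\phi^{-1}=s_m$ with $m\neq n$. Using the identity $s_m=\bar{\tau}^{n-m}\circ s_n\circ\bar{\tau}^{m-n}$, the element $\bar{\tau}^{m-n}\circ\phi$ centralizes $s_n$; conjugating by $\bar{\tau}^n$ and invoking $\bar{\tau}^n\circ s_n\circ\bar{\tau}^{-n}=s_0$, I find that
\[
\chi:=\bar{\tau}^m\circ\phi\circ\bar{\tau}^{-n}
\]
centralizes $s_0$ in $\Aut(S)$. Since $\bar{\tau}$, $\phi$, and therefore $\chi$ all preserve $L_0$, and $\phi|_{L_0}=(z\mapsto\epsilon z+d)$, restriction to $L_0$ (a homomorphism) gives
\[
\chi|_{L_0}=(z\mapsto 3^m z)\circ(z\mapsto\epsilon z+d)\circ(z\mapsto 3^{-n}z)=\bigl(z\mapsto\epsilon\,3^{m-n}z+3^m d\bigr),
\]
an affine self-map of $L_0$ with multiplier $\epsilon\,3^{m-n}$.

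It remains to force $m=n$. Because $\chi$ commutes with $s_0$, the permutation of the six lines $L_i$ induced by $\chi$ commutes with the permutation $(L_1\,L_4)(L_2\,L_3)$ induced by $s_0$, and so preserves the set of its fixed lines $\{L_0,L_5\}$; as $\chi$ fixes $L_0$, it also fixes $L_5$ and hence preserves the four lines $L_1,L_2,L_3,L_4$. Consequently $\chi|_{L_0}$ permutes the four points $p_i=L_i\cap L_0$, so a suitable power of $\chi|_{L_0}$ fixes at least three points of $L_0$ and is therefore the identity; thus $\chi|_{L_0}$ has finite order. An affine transformation has finite order only when its multiplier is a root of unity, and the only real roots of unity are $\pm1$; since $\epsilon\,3^{m-n}$ is real this gives $3^{m-n}=1$, i.e.\ $m=n$, the desired contradiction. (Should $\chi$ in addition preserve the two pairs $L_1\cup L_4$ and $L_2\cup L_3$, it lies in $\bigautoev$, and the preceding lemma already forces $\chi\in\langle\id_S,s_0\rangle$, after which the same multiplier computation applies; the finite-order argument is what dispenses with that case distinction.) I expect the main obstacle to be the bookkeeping that guarantees $\chi$ genuinely preserves the four lines $L_1,\dots,L_4$—this rests on the explicit permutation of the $L_i$ effected by $s_0$ together with the fact that every automorphism permutes the unique anticanonical configuration—and one must also confirm at the outset that each $s_n$ really lies in $\bigautoev$ and not merely in $\bigauto$. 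Granting the non-conjugacy of the $s_n$, the classes $[s_0],[s_1],[s_2],\dots$ are distinct, so $\bigautoev$ contains infinitely many conjugacy classes of involutions.
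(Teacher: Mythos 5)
Your reduction is sound and is essentially the paper's: from a hypothetical conjugacy $\phi\circ s_n\circ\phi^{-1}=s_m$ you correctly manufacture $\chi=\bar{\tau}^m\circ\phi\circ\bar{\tau}^{-n}$ centralizing $s_0$ and restricting to $L_0$ with multiplier $\epsilon\,3^{m-n}$, so everything hinges on showing that an element of the centralizer of $s_0$ cannot have such a multiplier. But the mechanism you offer for this --- ``$\chi$ preserves the set $\{L_1,\dots,L_4\}$, hence $\chi|_{L_0}$ permutes the four points $p_i=L_i\cap L_0$, hence has finite order'' --- fails. On $S$ the fifteen intersection points $p_{ij}$ are blown up, so the strict transforms of the six lines are pairwise \emph{disjoint}; an automorphism permuting the lines need not permute the points of the strict transform of $L_0$ lying over the $p_i$ (those are the points $\tilde{L}_0\cap E_{0i}$, and $\chi$ need not permute the exceptional curves $E_{0i}$). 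The map $\bar{\tau}$ itself refutes your inference: some power $\bar{\tau}^N$ fixes every one of the six lines, yet $\bar{\tau}^N|_{L_0}=(z\mapsto 3^Nz)$ has infinite order. So the finite-order argument cannot exclude the multiplier $\epsilon\,3^{m-n}$, and in particular it does not ``dispense with the case distinction'': the case in which $\chi$ exchanges the two pairs $\{L_1,L_4\}$ and $\{L_2,L_3\}$ (which is permitted for an element centralizing the permutation $(L_1L_4)(L_2L_3)$) is left genuinely open. A second, related gap is the unverified claim that every $s_n$ lies in $\bigautoev$: conjugating by $\bar{\tau}^n$ can change the induced permutation of the six lines, so for general $n$ the involution $s_n$ need not fix $L_5$ or preserve the two pairs.

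Both gaps are repaired by the paper's device, which is to work only with $s_{Nn}=\bar{\tau}^{-Nn}\circ s_0\circ\bar{\tau}^{Nn}$, where $N$ is chosen so that $\bar{\tau}^N$ maps each line to itself. This guarantees at once that $s_{Nn}$ induces the same permutation of the lines as $s_0$ (hence lies in $\bigautoev$), and that the centralizing element $\chi=\bar{\tau}^{Nm}\circ\phi\circ\bar{\tau}^{-Nn}$ induces the same line permutation as $\phi$, so it fixes $L_0$ and $L_5$ and preserves both pairs. The centralizer lemma --- whose proof uses only these combinatorial conditions on the lines, not the condition that the restriction to $L_0$ be of the form $z\mapsto\pm z+c$ --- then forces $\chi\in\{\id_S,s_0\}$, and comparing multipliers on $L_0$ gives $3^{N(m-n)}=\pm1$, hence $m=n$. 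In short, the argument you relegate to a parenthetical is the actual proof, and the finite-order argument should be discarded.
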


\begin{proof}
Let \(\bar{\tau}\) be an automorphism of \(S\) restricting to \(z \mapsto 3z\) on \(L_0\). Since any automorphism of \(S\) permutes the six lines, there exists some \(N > 0\) for which the iterate \(\bar{\tau}^N\) maps each of the six lines \(L_i\) to itself.  The map \(s_{Nn} =  \bar{\tau}^{-Nn} \circ s_0 \circ \bar{\tau}^{Nn}\) is an involution which restricts to \(L_0\) as \(z \mapsto 3^{1-Nn} - z\), and since \(s_{Nn}\) induces the same permutation of the \(L_i\) as does \(s_0\), it lies in the subgroup \(\bigautoev\).

We claim that no two distinct \(s_{Nm}\) and \(s_{Nn}\) are conjugate by an element of \(\bigautoev\).  It suffices to show that \(s_0\) is not conjugate to any \(s_{Nn}\).  If \(s_{Nn} = \bar{\tau}^{-Nn} \circ s_0 \circ \bar{\tau}^{Nn} = \alpha \circ s_0 \circ \alpha^{-1}\) for some \(\alpha\), then \(\bar{\tau}^{Nn} \circ \alpha\) commutes with \(s_0\).  According to the previous lemma, either \(\alpha = \bar{\tau}^{-Nn}\) or \(\alpha = \bar{\tau}^{-Nn} \circ s_0\).  Since neither \(\bar{\tau}^{-Nn}\) nor \(\bar{\tau}^{-Nn} \circ s_0\) is contained in \(\bigautoev\) for any nonzero value of \(n\), the claim follows.
\end{proof}

\begin{lemma}
\label{autosreal}
Every automorphism of \(X^\prime_L\) is defined over \(K\).
\end{lemma}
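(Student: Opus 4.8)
The plan is to reduce the statement to a claim about automorphisms of the surface \(S\), and then to settle that claim by Galois descent, exploiting the fact that \(\Gal(L/K)\) acts trivially on \(N^1(S_{\Kbar})\).

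First I would observe that every ingredient in the construction of \(X^\prime\) is defined over \(K\): the six lines \(L_i\), hence the fifteen intersection points \(p_{ij} = L_i \cap L_j\) (intersections of \(K\)-lines are \(K\)-points); the involutions \(\imath_R\), the point \(p\), the surface \(T\) with its chosen point and curves; and finally \(\tilde s_0\) and \(u\), so that the center \(u \cup \tilde s_0(u)\) of the last blow-up is \(K\)-rational. Since Lemmas~\ref{noextraautos} and~\ref{blowupsequence} are geometric in nature—the \(\P^r\)-averseness hypotheses and the fiber-permuting arguments concern \(\Kbar\)-morphisms—they apply verbatim after base change to \(L\). Consequently \(\Aut(X^\prime_L)\) is identified with the subgroup of \(\id_S \times \Aut(S_L) \times \id_T\) consisting of the maps \(\id_S \times \psi \times \id_T\) with \(\psi\) stabilizing \(u \cup \tilde s_0(u)\); this is exactly \(\bigauto\) formed over \(L\). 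It therefore suffices to show that every such \(\psi \in \Aut(S_L)\) is defined over \(K\).

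Let \(\sigma\) generate \(\Gal(L/K)\). The key point is that \(N^1(S_{\Kbar})\) is spanned by \(H\) and the exceptional classes \(E_{ij}\), all of which are \(K\)-rational because the centers \(p_{ij}\) are \(K\)-points; hence \(\Gal(L/K)\) acts trivially on \(N^1(S_{\Kbar})\). For any \(\psi \in \Aut(S_L)\) this forces \((\psi^\sigma)^\ast = \sigma \psi^\ast \sigma^{-1} = \psi^\ast\) on \(N^1(S_{\Kbar})\), so that \(\psi^\sigma \circ \psi^{-1}\) acts trivially on \(N^1(S_{\Kbar})\). If I can show that an automorphism of \(S_{\Kbar}\) acting trivially on \(N^1(S_{\Kbar})\) must be the identity, then \(\psi^\sigma = \psi\), and \(\psi\) descends to \(K\) by Galois descent of morphisms for the \(K\)-variety \(S\). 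This yields the lemma, and feeds into Lemma~\ref{neededproperties}: taking \(G^\prime\) to be \(\bigautoev\)—of finite index in \(\Aut(X^\prime_L)\) and containing infinitely many conjugacy classes of involutions by Corollary~\ref{manyinvolutions}—the Galois action on this now-\(K\)-rational group is trivial.

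The main obstacle is therefore the faithfulness of the action of \(\Aut(S_{\Kbar})\) on \(N^1(S_{\Kbar})\), which I would prove using the explicit geometry. An automorphism \(\phi\) acting trivially on \(N^1\) fixes the class of each line \(L_i\); since \(L_i^2 = -4 < 0\) and \(L_i\) is irreducible, it is the unique effective divisor in its class, so \(\phi(L_i) = L_i\), and likewise \(\phi\) fixes each \((-1)\)-curve \(E_{ij}\). Then \(\phi\) fixes every point \(p_{ij} = L_i \cap L_j\); as each line carries five of these, \(\phi\) restricts to the identity on each \(L_i\). Now I would use the basepoint-free pencil \(\lambda : S \to \P^1\) attached to \(\abs{H - E_{05}}\) (the lines through \(p_{05}\)): its class is fixed by \(\phi\), so \(\phi\) preserves \(\lambda\). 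The four lines \(L_1, L_2, L_3, L_4\), which do not pass through \(p_{05}\), are sections of \(\lambda\) and are fixed pointwise, so \(\phi\) acts as the identity on the base \(\P^1\) and preserves every fiber. A general fiber \(F \cong \P^1\) meets these four sections in four points, all fixed by \(\phi\vert_F\); hence \(\phi\vert_F = \id\). Since \(\phi\) is the identity on a dense family of fibers, \(\phi = \id\). This establishes the faithfulness claim and completes the reduction.
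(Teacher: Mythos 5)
Your proof is correct and follows essentially the same route as the paper: reduce to an automorphism \(\psi\) of \(S_L\), use that \(\Gal(L/K)\) acts trivially on \(\Pic(S_L)\) because the blown-up points are \(K\)-rational, deduce that \(\psi\) and its Galois conjugate act identically on \(\Pic(S_L)\), and conclude by faithfulness of the action of \(\Aut(S_L)\) on \(\Pic(S_L)\). The only difference is that you establish this faithfulness by an explicit geometric argument (fixing the lines, the exceptional curves, and the fibers of \(\abs{H-E_{05}}\)), whereas the paper invokes rigidity of the \((-1)\)-curves together with \(H^0(S_L,TS_L)=0\); your version is more self-contained but not a different method.
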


\begin{proof}
Since \(S_L\) is constructed by blowing up \(K\)-points in \(\P^2\), its Picard group is generated by the classes of \(K\)-divisors.  The Galois action on \(\Pic(S_L)\) is thus trivial, and preserves the class of any \((-1)\)-curve.  Because each \((-1)\)-curve is rigid in its cohomology class, these curves are invariant under the conjugation map \(c : S_L \to S_L\).

Suppose that \(\phi : X^\prime_L \to X^\prime_L\)  is any automorphism.  Then \(\phi\) is induced by some automorphism \(\psi : S_L \to S_L\), and  \(c \circ \psi \circ c : S_L \to S_L\) is an \(K\)-automorphism which fixes each \((-1)\)-curve \(E\).  Since \(\Pic(S)\) is generated by classes of real \((-1)\)-curves, it follows that \(\psi\) and \(c \circ \psi \circ c\) have the same action on \(\Pic(S)\).  As \(H^0(S_L,TS_L) = 0\), these two maps must coincide, so that \(c \circ \psi = \psi \circ c\), and \(\psi\) is defined over \(K\).
\end{proof}

\begin{proof}[Proof of Theorem~\ref{twistexample}]
We have \(\Aut(X^\prime_L) \cong \bigauto\), and \(\bigautoev\) is a finite index subgroup of \(\bigauto\) on which \(\Gal(L/K)\) acts trivially. By Lemma~\ref{manyinvolutions}, \(\bigautoev\) contains infinitely many conjugacy classes of involutions, and Theorem~\ref{twistexample} then follows from Lemma~\ref{neededproperties}.
\end{proof}

\section{Acknowledgements}

I am deeply grateful to Igor Dolgachev for a number of discussions related to this problem.  Several people made useful comments on an earlier version, including Serge Cantat, Jeff Diller, Daniel Litt, James M\textsuperscript{c}Kernan, Roberto Svaldi, and Burt Totaro.   I have also benefited from discussions with Piotr Pokora on these rational surfaces. The participants in the MathOverflow thread~\cite{mothread} provided some useful context.

\singlespacing
\bibliographystyle{amsplain}
\bibliography{refs}

\end{document}